\newcommand{\cT}{\mathcal{T}}
\newcommand{\cR}{\mathcal{R}}
\newcommand{\cL}{\mathcal{L}}
\newcommand{\FR}{{{}^\bullet}\mathbb{R}}
\newcommand{\triangleL}{\triangleleft_{\mathcal{L}}}
\newcommand{\triangleLq}{\trianglelefteq_{\mathcal{L}}}
\newcommand{\triangleR}{\triangleright_{\mathcal{R}}}
\newcommand{\triangleRq}{\trianglelefteq_{\mathcal{R}}}
\begin{document}

\title*{Nests, and their Role in the Orderability Problem}

\author{Kyriakos Papadopoulos}

\institute{Kyriakos Papadopoulos \at College of Engineering, American University of the Middle East, Egaila, Kuwait \email{kyriakos.papadopoulos1981@gmail.com}}

\maketitle

%%%

\abstract{This chapter is divided into two parts. The first part is a survey of some recent results on
nests and the orderability problem. The second part consists of results, partial results and open questions,
all viewed in the light of nests. From connected LOTS, to products of LOTS and function spaces, up to an
order relation in the Fermat Real Line.}

%%%%%%%
\section{Introduction.}
\label{sec:1}
%%%%%%%

{\it ``...confusion connotes something which possesses no {\bf order}, \\the individual
parts of which are so strangely admixed and interwined, \\that it is impossible to detect
where each element actually belongs...''} \\(Extract from The Musical Dialogue, by {\it Nikolaus Harnoncourt}, Amadeus Press, 1997.)\\

What is an orderability theorem? In particular in S. Purisch's account of results on orderability and suborderability (see \cite{Purisch-History}),
one can read the formulation and development of several orderability theorems, starting from
the beginning of the 20th century and reaching our days. By an orderability theorem, in topology, we mean the following.
Let $(X,\cT)$ be a topological space. Under what
conditions does there exist an order relation $<$ on $X$ such that the topology $\cT_<$ induced
by the order $<$ is equal to $\cT$? As we can
see, this problem is very fundamental as it is of the same weight as the metrizability problem, for example
(let $X$ be a topological space: is there a metric $d$, on $X$, such that the metric topology generated
by this metric to be equal to the original topology of $X$?).

%%%%%%%
\section{Some History.}
\label{sec:2}
%%%%%%%

{\it ``{\bf Order} is a concept as old as the idea of number \\ and much of early mathematics was devoted
to \\ constructing and studying various subsets of the \\ real line.''} (Steve Purisch \cite{Purisch-History})

The great German mathematician Georg Cantor (1845-1918) is credited to be one of the inventors
of set theory. This fact makes him automatically one of the inventors of order-theory as well, as he is the one
who first introduced the class of cardinals and the class of ordinal numbers, two classes of rich
order-theoretic properties. Cantor was not only interested in defining classes of ordered sets, and studying their arithmetic;
he also produced major results while examining order-isomorphisms, that is, bijective
order-preserving mappings between sets whose inverses are also order-preserving. S. Purisch gives a complete list of
these historic papers written by Cantor, in his article ``A History of Results on Orderability
and Suborderability'' \cite{Purisch-History}.

Together with set theory, the field of topology met a rapid rising in the early $20^{th}$ century
and new problems, combining both fields, appeared. A topologist's temptation is always to
examine what sort of topology can be introduced in a given set. So, a very early question was what is the relationship between the natural topology of a set and the topology which is induced by an ordering in this set; this question led
to the formulation of the {\em orderability problem}.

According to Purisch, one of the earliest orderability theorems was introduced by  O. Veblen and N.J. Lennes,
who were both students of the American mathematician E.H. Moore (1862-1932), and who attended his geometry seminar.
This theorem stated that {\em every metric continuum, with exactly two non-cut points, is homeomorphic
to the unit interval}. For the statement of the theorem, Veblen combined the notions of
ordered set and topology, for defining a simple arc. Lennes used up-to-date machinery to
prove Veblen's statement, a proof that was published in 1911.

In the meanwhile, some of the greatest mathematicians of the first half of the 20th century,
like the French mathematicians R. Baire, M. Fr\'{e}chet, the Dutch mathematician L.E.J. Brouwer, the Jewish-German mathematician F. Hausdorff,
the Polish mathematicians S. Mazurkiewicz, W. Sierp\'{i}nski, the Russian mathematicians P. Alexandroff and P. Urysohn and others, were devoted to constructing various subsets
of the real line. In particular, Baire used ideas of the Yugoslavian mathematician D. Kurepa and of the Dutch mathematician A.F. Monna, on non-Archimedean spaces, in order to
characterise the set of irrational numbers. The British mathematician, A.J. Ward, found a topological characterisation of the real line
 (1936), stating that {\em the real line is homeomorphic to a separable, connected and locally connected metric space $X$,
 such that $X-\{p\}$ consists of exactly two components, for every $p \in X$}.

A more general result (1920), by Mazurkiewicz and Sierp\'{i}nski, stated
that {\em compact, countable metric spaces are homeomorphic to well-ordered sets}; this is one of the first, if not
the first, topological characterisation of abstract ordered sets.

Having in mind that a special version of the orderability problem was solved in the beginning of the 70s (J. van Dalen and E. Wattel), its formulation
started from the beginning of the 40s. In particular, the Polish-American mathematician S. Eilenberg,
gave in 1941 the following result: {\em a connected space, $X$, is weakly orderable, if and only if
$X \times X$ minus the diagonal is not connected}.
This condition is also necessary and sufficient for a connected, locally connected space to be orderable.

The American mathematician E. Michael extended
this work and showed, in 1951, that {\em a connected Hausdorff space $X$ is a weakly orderable space, if and only
if $X$ admits a continuous selection}.

It took two more decades, for a complete topological characterization of GO-spaces and LOTS to appear. In
1972 J. de Groot and P.S. Schnare showed \cite{de-groot} that {\em a compact $T_1$ space $X$ is LOTS, if and only if
there exists an open subbase $\mathcal{S}$ of $X$ which is the union of two nests, such that every cover
of the space, by elements of $\mathcal{S}$, has a two element subcover}. {\bf J. van Dalen and E. Wattel}
used the characterisation of de Groot and Schnare as a basis for their construction, which led to
a solution of the orderability problem via nests. We revisited van Dalen and Wattel's characterization
in \cite{Good-Papadopoulos}, and we introduced a simpler proof of their main characterization theorem.

The study of ordered spaces did not finish with the solution to the orderability problem that
was proposed by van Dalen and Wattel. On the contrary, many interesting and important results have
appeared since then. We will now refer to those results which have motivated our own research
in particular.

In 1986, G.M. Reed published an article with title ``The Intersection Topology w.r.t. the Real Line
and the Countable Ordinals'' \cite{Reed}. The author constructed there a class which was shown
to be a surprisingly useful tool in the study of abstract spaces. We know that,
if $\cT_1,\cT_2$ are topologies on a set $X$, then the {\em intersection topology}, with respect
to $\cT_1$ and $\cT_2$, is the topology $\cT$ on $X$ such that the set $\{U_1 \cap U_2: U_1 \in \cT_1 \textrm{ and } U_2 \in \cT_2\}$
forms a base for $(X,\cT)$. Reed introduced the class $\mathcal{C}$, where $(X,\cT) \in \mathcal{C}$ if and only if $X = \{x_\alpha: \alpha < \omega_1\} \subset \mathbb{R}$, where $\cT_1 = \cT_{\mathbb{R}}$ and $\cT_2 = \cT_{\omega_1}$ and $\cT$ is the
intersection of $\cT_{\mathbb{R}}$ (the subspace real line topology on $X$) and $\cT_{\omega_1}$ (the order
topology on $X$, of type $\omega_1$). In particular, Reed showed that if $(X,\cT) \in \mathcal{C}$,
then $X$ has rich topological, but not very rich order-theoretic properties. In particular, $X$ is a completely regular, submetrizable, pseudo-normal, collectionwise Hausdorff, countably metacompact,
first countable, locally countable space, with a base of countable order, that is neither subparacompact,
metalindel{\"o}f, cometrizable nor locally compact. That an $(X,\cT) \in \mathcal{C}$ does not necessarily have
rich order-theoretic properties comes from the fact that there exists, in ZFC, an $(X,\cT) \in \mathcal{C}$
which is not normal.

Eric K. van Douwen characterised in 1993 \cite{van-douwn-misnamed} the noncompact spaces, whose
every noncompact image is orderable, as the noncompact continuous images of $\omega_1$. Van Douwen
refers to a closed non-compact set as {\em cub} (corresponding to closed unbounded sets in ordinals - also met as club in the literature),
and he calls {\em bear} a space which is noncompact and has no disjoint cubs.
Here we state his result that has motivated our research on ordinals (see \cite{Good-Papadopoulos}):\\
For a noncompact space $X$, the following are equivalent:
\begin{enumerate}
\item $X$ is a continuous image of $\omega_1$.

\item Every noncompact continuous image of $X$ is orderable.

\item $X$ is scattered first countable orderable bear.

\item $X$ is locally countable orderable bear.

\item $X$ has a compatible linear order, all initial closed segments
of which are compact and countable.
\end{enumerate}

%%%%%%%
\section{A Survey of Recent Results on Nests.}
\label{sec:3}
%%%%%%%

\subsection{Characterizations of LOTS.}

As we also mentioned in Section \ref{sec:2}, van Dalen and Watten used nests in order
to give a solution to the orderability problem, and in \cite{Good-Papadopoulos}
we gave a more order- and set-theoretic dimension to this characterization. In particular,
we did not declare our space being $T_1$, but its topology generated by a (so-called)
$T_1$-separating subbase.

\begin{definition}\label{separating-families}
Let $X$ be a set. We say that a collection of subsets $\mathcal{S}$ of $X$:
\begin{enumerate}
\item
$T_0$-{\em separates $X$}, if and only if for all $x,y \in X$, such that $x \neq y$, there exists $S \in
\mathcal{S}$, such that $x \in S$ and $y \notin S$ or $y \in S$
and $x \notin S$,

\item
$T_1$-{\em separates $X$}, if and only if for all $x,y \in X$, such that $x \neq y$, there exist $S,T \in
\mathcal{S}$, such that $x \in S$ and $y \notin S$ and also $y \in
T$ and $x \notin T$ and

\end{enumerate}
\end{definition}

One can easily see that a space is $T_0$ (resp. $T_1$) if and only if its topology is generated by
a $T_0$- (resp. $T_1$-) separating subbase, but the statement of Definition \ref{separating-families} is not
valid for the $T_2$ separation axiom, if one defines a $T_2$-separating subbase in an analogous way.

\begin{definition}
Let $X$ be a set and let $\cL \subset \mathcal{P}(X)$. We define an order
$\triangleL$ on $X$ by declaring that $x \triangleL y$,
if and only if there exists some $L \in \cL$, such that $x \in L$ and $y \notin L$.
\end{definition}

In \cite{Good-Papadopoulos} we showed the close link between nests and linear orders in Theorem \ref{theorem - for Theorem Dalen-Wattel} that follows below.

\begin{theorem}\label{theorem - for Theorem Dalen-Wattel}
Let $X$ be a set and let $\cL \subset \mathcal{P}(X)$. Then, the following hold:
\begin{enumerate}

\item If $\cL$ is a nest, then $\triangleL$ is a transitive relation.

\item $\cL$ is a nest, if and only if for every $x,y \in X$, either
$x=y$ or $x \ntriangleleft_\mathcal{L} y$ or $y \ntriangleleft_\mathcal{L} x$.

\item $\cL$ is $T_0$-separating, if and only if for every $x,y \in X$, either
$x=y$ or $x \triangleL y$ or $y \triangleL x$.

\item $\cL$ is a $T_0$-separating nest, if and only if $\triangleL$ is a linear
order.

\end{enumerate}
\end{theorem}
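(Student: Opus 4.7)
The plan is to verify the four items in turn by directly chasing the definitions of nest, $T_0$-separation, and the relation $\triangleL$. Each claim reduces to a simple case analysis on the inclusion-comparability of one or two witnessing sets from $\cL$, so the overall strategy is less about clever ideas than about careful bookkeeping of which subset contains which point.

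For item (1), I would assume $x \triangleL y$ and $y \triangleL z$, pick witnesses $L_1, L_2 \in \cL$ with $x \in L_1$, $y \notin L_1$, $y \in L_2$, $z \notin L_2$, and use the nest property to compare them by inclusion. The case $L_2 \subseteq L_1$ is ruled out immediately, since $y \in L_2$ would force $y \in L_1$; hence $L_1 \subseteq L_2$, giving $x \in L_2$ and $z \notin L_2$, i.e.\ $x \triangleL z$. Item (2) is then an asymmetric companion of (1). Forward: if both $x \triangleL y$ and $y \triangleL x$ held for distinct $x,y$, the two witnesses would each contain a point that the other excludes, hence be incomparable, contradicting the nest property. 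Converse (contrapositive): from incomparable $L_1, L_2 \in \cL$, pick $x \in L_1 \setminus L_2$ and $y \in L_2 \setminus L_1$ to produce a distinct pair with both $x \triangleL y$ (via $L_1$) and $y \triangleL x$ (via $L_2$).

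Item (3) is essentially an unfolding of definitions: the condition ``for all $x \neq y$, $x \triangleL y$ or $y \triangleL x$'' says precisely that some $L \in \cL$ contains exactly one of $x, y$, which is $T_0$-separation. For item (4), I would assemble the previous parts, first noting that $\triangleL$ is always irreflexive, since no single $L$ can both contain and exclude $x$. If $\cL$ is a $T_0$-separating nest, then $\triangleL$ is transitive by (1), asymmetric on distinct pairs by (2), and total on distinct pairs by (3); together with irreflexivity these yield strict trichotomy, hence a strict linear order. Conversely, any strict linear order satisfies both the asymmetry clause of (2) and the totality clause of (3), forcing $\cL$ to be a $T_0$-separating nest.

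The argument does not present any deep obstacle; the one place where a slip is easy is in item (1), where one must remember to use the $y$-membership pattern to discard the wrong inclusion case rather than to do unnecessary casework on both. This is also the step on which the forward directions of (2) and (4) implicitly rest.
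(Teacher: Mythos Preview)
Your proof is correct in all four parts; the definition-chasing and case analysis you outline is exactly the natural route, and the only delicate point---using the $y$-membership to eliminate the inclusion $L_2\subseteq L_1$ in item~(1)---is handled cleanly. Note, however, that the present paper does not actually supply its own proof of this theorem: it merely states the result and attributes it to \cite{Good-Papadopoulos}, so there is no in-paper argument to compare against. Your write-up is the standard elementary proof one would expect and would serve perfectly well as the missing justification.
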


We still needed some more tools, in order to restate van Dalen and Wattel's characterization
theorem in a more elementary way. Theorem \ref{theorem-preliminary1} shows
the connection between a subbase which generates a GO-topology and two $T_0$-separating
nests with reverse orders, whose union $T_1$-separates the space.

\begin{theorem}\label{theorem-preliminary1}
Let $X$ be a set. Suppose that $\cL$ and $\cR$ are two nests on $X$. Then,
$\cL \cup \cR$ is $T_1$-separating, if and only if $\cL$ and $\cR$ are
both $T_0$-separating and $\triangleL = \triangleR$.
\end{theorem}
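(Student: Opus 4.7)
\emph{Strategy.} The plan is to prove the two directions of the biconditional separately. In each direction, the key mechanism is that a single $T_1$-separating witness from $\cL \cup \cR$, combined with the nest property, is forced to live in a specific one of the two families. This will convert set-theoretic information about the subbase into order-theoretic information about the induced relations, and vice versa.

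\emph{Forward direction.} Assuming $\cL \cup \cR$ is $T_1$-separating, I would first establish $\triangleL = \triangleR$. Starting from $x \triangleL y$, pick the witnessing $L \in \cL$ with $x \in L$ and $y \notin L$. Since $x \neq y$, $T_1$-separation delivers some $S \in \cL \cup \cR$ with $y \in S$ and $x \notin S$. If $S$ were in $\cL$, the nest property would require $L \subseteq S$ or $S \subseteq L$, but $x \in L \setminus S$ and $y \in S \setminus L$ block both inclusions. Hence $S \in \cR$, which by the definition of $\triangleR$ witnesses $x \triangleR y$. The reverse implication $x \triangleR y \Rightarrow x \triangleL y$ follows by the symmetric argument with the roles of $\cL$ and $\cR$ swapped. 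Granted this equality, $T_0$-separation of $\cL$ drops out quickly: for $x \neq y$, $T_1$-separation yields some $S$ with $x \in S$, $y \notin S$; if $S \in \cL$ we are done, and if $S \in \cR$ then $x \triangleR y$, hence $x \triangleL y$, producing the desired member of $\cL$. The case for $\cR$ is symmetric.

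\emph{Backward direction.} For the converse, assume both families are $T_0$-separating nests with $\triangleL = \triangleR$. By part (4) of Theorem~\ref{theorem - for Theorem Dalen-Wattel}, $\triangleL$ is a linear order, so for distinct $x,y$, without loss of generality $x \triangleL y$; this delivers $L \in \cL$ with $x \in L$, $y \notin L$. The hypothesis $\triangleL = \triangleR$ simultaneously gives $x \triangleR y$, which by definition yields $R \in \cR$ with $y \in R$, $x \notin R$. The pair $(L,R)$ is exactly the pair of witnesses demanded by $T_1$-separation of $\cL \cup \cR$.

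\emph{Main obstacle.} The only nontrivial step is the nest-comparability contradiction inside the forward direction, where one must carefully rule out $S \in \cL$ using \emph{both} directions of the nest inclusion; once this lemma-sized observation is in hand, the rest of the argument is essentially bookkeeping with the definitions and the already-established Theorem~\ref{theorem - for Theorem Dalen-Wattel}.
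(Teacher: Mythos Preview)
Your proof is correct. The paper itself does not supply a proof of this theorem here---it is quoted as a result from \cite{Good-Papadopoulos}---so there is no in-paper argument to compare against; your approach via the nest-comparability contradiction is the natural one and is exactly what the machinery of Theorem~\ref{theorem - for Theorem Dalen-Wattel} is set up to support.

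One small inconsistency to clean up: in the $T_0$-separation step for $\cL$, when $S\in\cR$ satisfies $x\in S$ and $y\notin S$, your own convention from the preceding paragraph (where $S\in\cR$ with $y\in S$, $x\notin S$ witnessed $x\triangleR y$) gives $y\triangleR x$, not $x\triangleR y$. Consequently you obtain $y\triangleL x$ rather than $x\triangleL y$; this still produces a member of $\cL$ separating $x$ and $y$, so the conclusion is unaffected, but the direction should be corrected for consistency.
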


A key tool, for van Dalen and Wattel's solution of the Orderability Problem,
was the notion of interlocking.

\begin{definition}\label{definition - interlocking}
Let $X$ be a set and let $\mathcal{S} \subset \mathcal{P}(X)$. We say that
$\mathcal{S}$ is interlocking, if and only if
for each $T \in \mathcal{S}$, such that:
\[T = \bigcap \{S : T \subset S, S \in \mathcal{S} - \{T\}\}\] we have that:
\[T = \bigcup \{S : S \subset T, S \in \mathcal{S} - \{T\}\}.\]
\end{definition}

By Lemma \ref{Lemma - interl via min max} that follows, we clarified the relationship between
an interlocking nest and the properties of its induced order.

\begin{lemma}\label{Lemma - interl via min max}
Let $X$ be a set and let $\cL$ be a $T_0$-separating nest on $X$. Then,
the following hold for $L \in \cL$:
\begin{enumerate}

\item $L = \bigcap \{M \in \cL : L \subsetneq M\}$, if and only if $X-L$
has no $\triangleL$-minimal element.

\item $L = \bigcup \{M \in \cL : M \subsetneq L\}$, if and only if $L$
has no $\triangleL$-maximal element.

\end{enumerate}
\end{lemma}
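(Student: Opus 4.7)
My plan begins with the order-theoretic setup: since $\mathcal{L}$ is a $T_0$-separating nest, Theorem \ref{theorem - for Theorem Dalen-Wattel}(4) gives that $\triangleL$ is a linear order on $X$. I would also record a useful compatibility between nest inclusion and $\triangleL$: whenever $A, B \in \mathcal{L}$ with $A \subsetneq B$, every $a \in A$ satisfies $a \triangleL b$ for every $b \in B \setminus A$, the set $A$ itself serving as the witness. This observation will be invoked implicitly throughout both parts.

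For part (1), the inclusion $L \subseteq \bigcap \{M \in \mathcal{L} : L \subsetneq M\}$ is automatic, so failure of equality amounts to the existence of some $x \in X \setminus L$ lying in every $M \in \mathcal{L}$ with $L \subsetneq M$. My plan is to show that such an $x$ must be the $\triangleL$-minimum of $X \setminus L$, and conversely that any $\triangleL$-minimum of $X \setminus L$ lies in every such $M$. For the first direction, given $y \in X \setminus L$ with $y \neq x$, any witness $K$ for $y \triangleL x$ would, by the nest property, be comparable with $L$; the subcase $K \subseteq L$ puts $y$ in $L$, while $L \subsetneq K$ forces $x \in K$ by the choice of $x$, and both are contradictions. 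Linearity of $\triangleL$ then gives $x \triangleL y$. For the reverse, given a $\triangleL$-minimum $x$ of $X \setminus L$ and $M \supsetneq L$, I pick $m \in M \setminus L$ and split into the three cases allowed by linearity between $x$ and $m$; the case $m \triangleL x$ contradicts minimality, and in the case $x \triangleL m$ the nest property forces any witness $K$ to lie inside $M$, so $x \in K \subseteq M$.

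Part (2) is dual, with the $\triangleL$-maximum of $L$ playing the role of the $\triangleL$-minimum of $X \setminus L$. The automatic inclusion is now $\bigcup \{M \in \mathcal{L} : M \subsetneq L\} \subseteq L$, and the element witnessing failure is precisely a $\triangleL$-maximum of $L$. The case analysis follows the same template: a putative witness $K$ is compared with $L$ by the nest property, and the only non-trivial subcase $K \subsetneq L$ is contradicted by the defining property of the would-be maximum; in the reverse direction, antisymmetry of $\triangleL$ rules out simultaneously having $x \triangleL \ell$ (from $M \subsetneq L$) and $\ell \triangleL x$ (from maximality). I expect the main obstacle to be organizational rather than conceptual, namely keeping track of which element is being tested against which nest member, while the real work is driven uniformly by the nest dichotomy together with the linearity of $\triangleL$.
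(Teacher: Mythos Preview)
The paper does not actually supply a proof of this lemma; it is quoted from \cite{Good-Papadopoulos} as part of the survey in Section~\ref{sec:3}, so there is no in-paper argument to compare against. Your proposal is correct and is the natural proof: it uses Theorem~\ref{theorem - for Theorem Dalen-Wattel}(4) to get that $\triangleL$ is linear, and then in each direction compares a putative witness $K\in\cL$ with $L$ (or with $M$) via the nest dichotomy to force a contradiction. The case analyses you outline in both parts go through exactly as you describe, and the edge case where $\{M\in\cL:L\subsetneq M\}$ (respectively $\{M\in\cL:M\subsetneq L\}$) is empty is handled automatically by the vacuous interpretation of the intersection (respectively union) together with $T_0$-separation.
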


It is immediate, from Definition \ref{definition - interlocking}, that a collection $\cL$
is interlocking, if and only if for all $L \in \cL$, either $L = \bigcup \{N \in \cL : N \subsetneq L\}$
or $L \neq \bigcap \{N \in \cL : L \subsetneq N\}$. So, we observed that Theorem \ref{theorem - for Theorem Dalen-Wattel}
and Lemma \ref{Lemma - interl via min max} therefore imply the following.

\begin{theorem}\label{theorem - before Lema for Theorem Dalen-Wattel}
Let $X$ be a set and let $\cL$ be a $T_0$-separating nest on $X$. The
following are equivalent:
\begin{enumerate}

\item $\cL$ is interlocking;

\item for each $L \in \cL$, if $L$ has a $\triangleL$-maximal element, then
$X-L$ has a $\triangleL$-minimal element;

\item for all $L \in \cL$, either $L$ has no $\triangleL$-maximal element
or $X-L$ has a $\triangleL$-minimal element.

\end{enumerate}
\end{theorem}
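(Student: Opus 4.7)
The plan is to reduce the theorem entirely to the displayed reformulation of interlocking stated just above it, together with Lemma \ref{Lemma - interl via min max}; once this is done, (2) and (3) collapse into one another by propositional logic.

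First I would record the reformulation: $\cL$ is interlocking if and only if, for every $L \in \cL$, either $L = \bigcup \{N \in \cL : N \subsetneq L\}$ or $L \neq \bigcap \{N \in \cL : L \subsetneq N\}$. Then I would translate each disjunct by Lemma \ref{Lemma - interl via min max}. Part (2) of the lemma converts the first disjunct into the statement that $L$ has no $\triangleL$-maximal element. Part (1) states that $L = \bigcap \{N \in \cL : L \subsetneq N\}$ iff $X - L$ has no $\triangleL$-minimal element; negating both sides makes the second disjunct equivalent to ``$X - L$ has a $\triangleL$-minimal element''. Substituting both translations into the reformulation produces condition (3) verbatim, so (1) $\iff$ (3).

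For (2) $\iff$ (3) I would simply observe that (2) is the implication ``$L$ has a $\triangleL$-maximal element $\Rightarrow$ $X - L$ has a $\triangleL$-minimal element'', while (3) is its classical contrapositive-disjunction ``$L$ has no $\triangleL$-maximal element, or $X - L$ has a $\triangleL$-minimal element'', and these are logically equivalent. There is no genuine obstacle here, since all the substantive content sits in Lemma \ref{Lemma - interl via min max}; the only care required is to keep track of which disjunct of the reformulation is handled by which part of the lemma (the union-style disjunct by part (2), the intersection-style disjunct by part (1)), and to remember to negate both sides of the biconditional when applying part (1).
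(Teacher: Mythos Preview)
Your proposal is correct and matches the paper's own approach exactly: the paper does not give a standalone proof but simply notes that the reformulation of interlocking together with Lemma~\ref{Lemma - interl via min max} (and Theorem~\ref{theorem - for Theorem Dalen-Wattel}, which just guarantees that $\triangleL$ is a linear order so that maxima and minima make sense) yields the result. You have merely spelled out the two substitutions and the trivial propositional equivalence that the paper leaves implicit.
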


So, Theorem \ref{theorem - before Lema for Theorem Dalen-Wattel} is a specific version
of the notion interlocking in the case of a linearly ordered topological space, and this
gave us enough tools to prove the following alteration of van Dalen and Wattel's Theorem:

\begin{theorem}[van Dalen \& Wattel]\label{theorem - them Dalen-Wattel}
Let $(X,\mathcal{T})$ be a topological space. Then:
\begin{enumerate}

\item If $\mathcal{L}$ and $\mathcal{R}$ are two nests of open
sets, whose union is $T_1$-separating, then every $\triangleleft_{\mathcal{L}}$-order
open set is open, in $X$.

\item $X$ is a GO space, if and only if there are two nests, $\mathcal{L}$ and $\mathcal{R}$,
of open sets, whose union is $T_1$-separating and forms a subbase for $\cT$.

\item $X$ is a LOTS, if and only if there are two interlocking nests $\cL$ and $\cR$,
of open sets, whose union is $T_1$-separating and forms a subbase for $\cT$.

\end{enumerate}
\end{theorem}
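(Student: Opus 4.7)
The plan is to exploit Theorem~\ref{theorem-preliminary1} to convert the hypothesis into a single linear order $\triangleL = \triangleR$, and then to read each element of $\cL$ (resp.\ $\cR$) as a $\triangleL$-initial (resp.\ final) segment, so that topological and order-theoretic statements translate cleanly. For (1), Theorem~\ref{theorem-preliminary1} together with Theorem~\ref{theorem - for Theorem Dalen-Wattel}(4) yields that $\triangleL$ is a linear order and equals $\triangleR$. Then, directly from the definition of $\triangleL$,
\[
(\leftarrow, a) = \bigcup\{L \in \cL : a \notin L\}, \qquad (a, \rightarrow) = \bigcup\{R \in \cR : a \notin R\},
\]
where the second identity uses $\triangleL = \triangleR = \triangleright_{\cR}$; both are unions of sets open in $\cT$, so the order topology lies in $\cT$.

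For (2), the reverse direction follows from (1) once I observe that each $L \in \cL$ is a $\triangleL$-initial segment: if $y \triangleL x$ with $x \in L$, the witness $L' \in \cL$ with $y \in L'$ and $x \notin L'$ must satisfy $L' \subseteq L$ by the nest property, hence $y \in L$. Dually each $R \in \cR$ is $\triangleL$-final, making every basic open set $L \cap R$ convex, so $X$ is a GO-space. In the forward direction I would take $\cL$ and $\cR$ to be the collections of all open $<$-initial and open $<$-final segments of a compatible order $<$; these are nests of open sets, their union $T_1$-separates $X$ via $(\leftarrow, y)$ and $(x, \rightarrow)$ for any $x < y$, and it is standard that open initial and final segments form a subbase for any GO-topology.

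For (3), the forward direction takes $\cL = \{(\leftarrow, a) : a \in X\}$ and $\cR = \{(a, \rightarrow) : a \in X\}$; interlocking of $\cL$ holds because whenever $(\leftarrow, a)$ has a $\triangleL$-maximum --- necessarily the predecessor of $a$ --- the complement $[a, \rightarrow)$ has $a$ as its $\triangleL$-minimum, which is exactly condition (2) of Theorem~\ref{theorem - before Lema for Theorem Dalen-Wattel}, and $\cR$ is symmetric. In the reverse direction, by (2) $X$ is a GO-space, so I need only show that each subbase element is order-open. For $L \in \cL$: if $L$ has no $\triangleL$-maximum then $L = \bigcup_{x \in L}(\leftarrow, x)$; if $L$ has a maximum, interlocking together with Theorem~\ref{theorem - before Lema for Theorem Dalen-Wattel} supplies a $\triangleL$-minimum $m'$ of $X - L$, so $L = (\leftarrow, m')$. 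The main obstacle is the parallel analysis of $\cR$: Lemma~\ref{Lemma - interl via min max} and Theorem~\ref{theorem - before Lema for Theorem Dalen-Wattel} are stated only for $\cL$, so I must first transcribe them using the identifications $\triangleleft_{\cR}$-max $=$ $\triangleL$-min and $\triangleleft_{\cR}$-min $=$ $\triangleL$-max, concluding that interlocking of $\cR$ reads ``if $R$ has a $\triangleL$-minimum then $X - R$ has a $\triangleL$-maximum,'' after which the same dichotomy writes each $R$ as an order-open final ray.
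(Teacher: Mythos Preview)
The paper does not include an explicit proof of this theorem in the text; it presents the statement as the culmination of the tools developed in Theorems~\ref{theorem - for Theorem Dalen-Wattel}, \ref{theorem-preliminary1}, \ref{theorem - before Lema for Theorem Dalen-Wattel} and Lemma~\ref{Lemma - interl via min max}, deferring the details to \cite{Good-Papadopoulos}. Your proposal correctly assembles those tools into a proof and matches the intended route: Theorem~\ref{theorem-preliminary1} reduces the two nests to a single linear order, the identities $(\leftarrow,a)=\bigcup\{L\in\cL:a\notin L\}$ and $(a,\rightarrow)=\bigcup\{R\in\cR:a\notin R\}$ give (1), the observation that each $L\in\cL$ is a $\triangleL$-initial segment yields convexity for (2), and the max/min reformulation of interlocking in Theorem~\ref{theorem - before Lema for Theorem Dalen-Wattel} handles (3). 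Your remark about transcribing Lemma~\ref{Lemma - interl via min max} to $\cR$ via the reversal $\triangleleft_{\cR}\text{-max}=\triangleL\text{-min}$ is exactly what is needed and is straightforward. In short, your argument is correct and is essentially the proof the paper is pointing to.
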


\subsection{Characterizations of Ordinals.}

Ordinals, like LOTS and GO-spaces, are fundamental building blocks for
set-theoretic and topological examples. In \cite{Good-Papadopoulos} we
used properties of nests in order to characterize ordinals topologically.
To achieve this, we considered our spaces to be ``scattered by
a nest''.

\begin{definition}\label{definition-scattered-1}
A topological space $X$ is {\em scattered}, if every non-empty subset $A \subset X$
has an isolated point, i.e. for
every non-empty $A \subset X$, there exists $a \in A$ and $U$ open in $X$,
such that $U \cap A= \{a\}$.
\end{definition}

Therefore, a space $X$ is {\em scattered}, if for every non-empty $A \subset X$, there
exists $U$ open in $X$, such that $|U \cap A| = 1$.

\begin{definition}\label{definition-scattered by family}
 Let $\mathcal{S}$ be a family of subsets of a set $X$. We say
 that $X$ is {\em scattered} by $\mathcal{S}$, if and only if
 for every $A \subset X$, there exists $S \in \mathcal{S}$, such
 that $|A \cap S| = 1$.
 \end{definition}

\begin{theorem}\label{theorem - triangleL is a well-order}
Let $X$ be a set and let $\cL$ be a nest on $X$. Then, the following are equivalent:
\begin{enumerate}

\item $\cL$ scatters $X$.

\item $\triangleL$ is a well-order on $X$.

\item $\cL$ is $T_0$-separating and well-ordered by $\subset$.

\item $\cL$ is $T_0$-separating and, for every non-empty subset $A$ of $X$, there is an $a \in A$,
such that for any $x \in A$ and any $L \in \cL$, if $x \in L$, then $a \in L$.

\end{enumerate}
\end{theorem}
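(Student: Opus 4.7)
My plan is to establish the cycle (1) $\Rightarrow$ (2) $\Rightarrow$ (3) $\Rightarrow$ (1) together with the equivalence (2) $\Leftrightarrow$ (4), invoking Theorem \ref{theorem - for Theorem Dalen-Wattel} throughout. The one workhorse observation, which I would record at the outset, is that whenever $\cL$ is a nest, every $L \in \cL$ is $\triangleL$-downward closed: if $y \in L$ and $x \triangleL y$ via some $L' \in \cL$ with $x \in L'$ and $y \notin L'$, then the nest comparability of $L$ and $L'$ together with $y \in L \setminus L'$ forces $L' \subseteq L$, and hence $x \in L$.

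For (1) $\Rightarrow$ (2), applying scattering to the pairs $\{x,y\}$ yields $T_0$-separation, and Theorem \ref{theorem - for Theorem Dalen-Wattel}(4) upgrades this to linearity of $\triangleL$; for an arbitrary non-empty $A$, if $A \cap S = \{a\}$ then every other $x \in A$ lies outside $S$, so $a \triangleL x$ and $a$ is the $\triangleL$-minimum of $A$. For (3) $\Rightarrow$ (1), given non-empty $A$ I would form $\cL_A = \{L \in \cL : L \cap A \neq \emptyset\}$, take its $\subset$-minimum $L_0$, and pick $a \in L_0 \cap A$; a second point $b \in L_0 \cap A$ would admit a $T_0$-separator $L \in \cL$ which by the nest sits strictly inside $L_0$ yet still meets $A$, contradicting the minimality of $L_0$, so $L_0 \cap A = \{a\}$. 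The equivalence (2) $\Leftrightarrow$ (4) is essentially a repackaging: the element $a$ demanded by (4) is forced to be the $\triangleL$-minimum of $A$, with the downward-closure observation supplying one direction and $T_0$-separation the other.

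The main obstacle is (2) $\Rightarrow$ (3), where one must transfer well-foundedness from $(X,\triangleL)$ to $(\cL,\subset)$. Linearity of $\triangleL$ already gives $T_0$-separation via Theorem \ref{theorem - for Theorem Dalen-Wattel}, so the real task is to well-order $\cL$ by $\subset$. Given a non-empty $\mathcal{M} \subseteq \cL$, I would set $B = \bigcap \mathcal{M}$; if $B = X$ then every member of $\mathcal{M}$ equals $X$ and we are done, otherwise let $b$ be the $\triangleL$-least element of $X \setminus B$ and pick some $L_b \in \mathcal{M}$ with $b \notin L_b$. If some $L \in \mathcal{M}$ were strictly contained in $L_b$, any $x \in L_b \setminus L$ would lie in $X \setminus B$ (because $B \subseteq L$) and satisfy $x \triangleL b$ (witnessed by $L_b$), contradicting the choice of $b$. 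Hence $L_b$ is the $\subset$-minimum of $\mathcal{M}$, closing the cycle.
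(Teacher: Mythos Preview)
The paper does not include its own proof of this theorem; it is quoted from \cite{Good-Papadopoulos} as part of the survey in Section~\ref{sec:3}, so your argument has to be judged on its own merits rather than by comparison.

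Your cycle $(1)\Rightarrow(2)\Rightarrow(3)$ and the equivalence $(2)\Leftrightarrow(4)$ are correct and well organised. The downward-closure observation is exactly the right lemma to isolate, the $(1)\Rightarrow(2)$ step is clean, and your $(2)\Rightarrow(3)$ argument via the $\triangleL$-least element of $X\setminus\bigcap\mathcal{M}$ is the natural one.

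There is, however, a genuine gap in $(3)\Rightarrow(1)$: you take the $\subset$-minimum of $\cL_A=\{L\in\cL:L\cap A\neq\emptyset\}$ without verifying that $\cL_A$ is non-empty. When $|A|\ge 2$ this follows from $T_0$-separation, but for a singleton $A=\{a\}$ it can fail. Concretely, let $X=\{0,1\}$ and $\cL=\{\{0\}\}$. This nest is $T_0$-separating and trivially well-ordered by $\subset$, so (3) holds (and one checks that (2) and (4) hold as well), yet no member of $\cL$ meets $\{1\}$, so $\cL$ does not scatter $X$ in the sense of Definition~\ref{definition-scattered by family}. Thus $(3)\Rightarrow(1)$ is actually false for the definitions exactly as stated in this survey; the original source presumably carries an implicit convention (for instance that $X\in\cL$, or that scattering is only required for sets with at least two points) which has been suppressed here. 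You should flag this edge case explicitly and state which reading you are adopting, rather than let the argument silently assume $\cL_A\neq\emptyset$.
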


\begin{theorem}\label{theorem - ordinal scatterness nest}
Let $X$ be a space. The following are equivalent:
\begin{enumerate}

\item $X$ is homeomorphic to an ordinal.

\item $X$ has two interlocking nests $\cL$ and $\cR$, of open sets, whose union is a $T_1$-separating
subbase, such that $\cL$ scatters $X$.

\item $X$ has two interlocking nests $\cL$ and $\cR$, of open sets, whose union is a $T_1$-separating
subbase, one of which is well-ordered by $\subset$ or $\supset$.

\item $X$ is scattered by a nest $\cL$, of clopen sets, such that:
\begin{enumerate}

\item $L \neq \bigcup \{M : M \subsetneq L\}$, for any $L \in \cL$ and

\item $\{L-M: L,M \in \cL\}$ is a base for $X$.

\end{enumerate}

\item $X$ is scattered by a nest of compact clopen sets.

\end{enumerate}

\end{theorem}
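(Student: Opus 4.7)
The plan is to prove the five conditions equivalent by running the cycle $(1)\Rightarrow(2)\Rightarrow(3)\Rightarrow(1)$ together with $(1)\Rightarrow(5)\Rightarrow(4)\Rightarrow(1)$, leveraging the two main preceding results: Theorem~\ref{theorem - them Dalen-Wattel}, which characterizes LOTS via two interlocking nests whose union is a $T_1$-separating subbase, and Theorem~\ref{theorem - triangleL is a well-order}, which identifies nest-scattering with $\triangleL$ being a well-order (equivalently, $\cL$ being $T_0$-separating and well-ordered by $\subset$).

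For $(1)\Rightarrow(2)$ I would assume $X\cong\alpha$ and take $\cL$, $\cR$ to be appropriate nests of open initial and final segments of $\alpha$, following the LOTS half of Theorem~\ref{theorem - them Dalen-Wattel}. Scattering by $\cL$ is automatic: for non-empty $A\subseteq X$ the initial segment $[0,\min A+1)$ meets $A$ in exactly $\{\min A\}$. The step $(2)\Rightarrow(3)$ is an immediate application of Theorem~\ref{theorem - triangleL is a well-order}. For $(3)\Rightarrow(1)$, Theorem~\ref{theorem - them Dalen-Wattel} already makes $X$ into a LOTS with underlying order $\triangleL=\triangleR$ (by Theorem~\ref{theorem-preliminary1}); if the distinguished nest is well-ordered by $\subset$, Theorem~\ref{theorem - triangleL is a well-order} upgrades $\triangleL$ to a well-order, while the analogous case with $\supset$ reduces by interchanging the two nests. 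A well-ordered LOTS is order-isomorphic, and therefore homeomorphic, to its unique order-type, an ordinal.

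For the second cycle, $(1)\Rightarrow(5)$ is constructive: for $X\cong\alpha$ the nest $\cL=\{[0,\gamma+1):\gamma<\alpha\}$ consists of compact clopen sets (each is a successor ordinal in the subspace order) and scatters $X$ as before. The implication $(5)\Rightarrow(4)$ follows by combining Lemma~\ref{Lemma - interl via min max} with compactness: $\cL$ scattering makes $\triangleL$ a well-order and each $L\in\cL$ an initial segment of $\triangleL$, and compactness of $L$ supplies a $\triangleL$-maximum, yielding (a); (b) then falls out from the structure of differences $L-M$ between initial segments. Finally for $(4)\Rightarrow(1)$, scattering again gives the well-order $\triangleL$, condition (a) says each $L\in\cL$ has a $\triangleL$-maximum, and via Theorem~\ref{theorem - before Lema for Theorem Dalen-Wattel} the complementary nest $\cR=\{X\setminus L:L\in\cL\}$ together with $\cL$ supplies two interlocking clopen nests whose union is a $T_1$-separating subbase --- the subbase property being a direct consequence of (b). Theorem~\ref{theorem - them Dalen-Wattel} then realizes $X$ as a well-ordered LOTS, hence as an ordinal.

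The main obstacle I anticipate is condition (b). In $(1)\Rightarrow(4)$ and $(5)\Rightarrow(4)$, the nest $\cL$ must be chosen so that the differences $L-M$ form a base for $\cT$, including isolated singletons such as $\{0\}$, while still satisfying (a); the latter forbids $\emptyset\in\cL$, so some care is needed. In $(4)\Rightarrow(1)$, one must in turn show that this base property is strong enough to identify $\cT$ with the order topology of $\triangleL$, and that the interlocking behaviour survives the edge cases where $X$ itself (if it has a maximum) may or may not appear in the nest.
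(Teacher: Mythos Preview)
The paper does not contain a proof of Theorem~\ref{theorem - ordinal scatterness nest}: it is presented in Section~\ref{sec:3} as part of a survey of results already established in~\cite{Good-Papadopoulos}, and no argument is reproduced here. There is therefore nothing in the present paper to compare your attempt against.

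That said, your plan is entirely in the spirit of the surrounding material. The two cycles $(1)\Rightarrow(2)\Rightarrow(3)\Rightarrow(1)$ and $(1)\Rightarrow(5)\Rightarrow(4)\Rightarrow(1)$ are the natural way to organise the proof, and each step you describe rests on exactly the tools the paper makes available: Theorem~\ref{theorem - them Dalen-Wattel} for the LOTS characterisation, Theorem~\ref{theorem-preliminary1} for $\triangleL=\triangleR$, Theorem~\ref{theorem - triangleL is a well-order} for the equivalence of scattering, well-ordering of $\triangleL$, and well-ordering of $\cL$ by~$\subset$, and Lemma~\ref{Lemma - interl via min max} together with Theorem~\ref{theorem - before Lema for Theorem Dalen-Wattel} for the max/min reformulation of interlocking. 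Your compactness argument for $(5)\Rightarrow(4)$(a) --- a compact $L$ covered by the nested open family $\{M\in\cL:M\subsetneq L\}$ would admit a single $M$ as cover --- is the right idea.

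One small imprecision: in $(3)\Rightarrow(1)$, ``interchanging the two nests'' does not by itself handle the case where the distinguished nest is well-ordered by~$\supset$. What you actually need is that a $T_0$-separating nest well-ordered by~$\supset$ makes the \emph{reverse} of $\triangleL$ a well-order, and that a LOTS and its order-reverse carry the same topology; this is easy but should be said explicitly. You have also correctly isolated the genuine delicate point, namely condition~(4)(b): the nest produced in $(1)\Rightarrow(5)$ or assumed in~(5) need not literally have $\{L-M:L,M\in\cL\}$ as a base (the minimal point of $X$ is the obvious obstruction), so some normalisation of $\cL$ --- or a convention about empty differences --- is required. How~\cite{Good-Papadopoulos} resolves this cannot be read off the present paper.
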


Corollary \ref{corollary - omega1} that follows leaded us
to a characterization of the ordinal $\omega_1$, with clear
links to the well-known Pressing (or Pushing) Down Lemma
in Set Theory.

\begin{corollary}\label{corollary - omega1}
$X$ is homeomorphic to a cardinal, if and only if $X$ is scattered
by a nest $\cL$, of compact clopen sets, such that $|L|<|X|$, for each $L \in \cL$.

In particular, $X$ is homeomorphic to $\omega_1$, if and only if $X$ is uncountable
and is scattered by a nest of compact, clopen, countable sets.
\end{corollary}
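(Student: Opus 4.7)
The plan is to leverage Theorem \ref{theorem - ordinal scatterness nest}(5), which supplies a homeomorphism $X \cong \alpha$ to some ordinal as soon as one has a scattering nest of compact clopen sets, and then to use the extra size hypothesis to upgrade ``ordinal'' to ``cardinal''. For the forward direction, given $X \cong \kappa$ with $\kappa$ an infinite cardinal, I would exhibit the nest $\cL = \{[0,\beta] : \beta<\kappa\}$; each $[0,\beta]$ is compact clopen, has cardinality $|\beta|<\kappa=|X|$, and scattering holds because $[0,\min A] \cap A = \{\min A\}$ for any nonempty $A \subseteq \kappa$.

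For the converse I apply Theorem \ref{theorem - ordinal scatterness nest}(5) to get $X \cong \alpha$, and must show $|\alpha| = \alpha$. The key structural remark is that each nonempty $L \in \cL$ is an initial segment of the well-order $\triangleL$, which (via the homeomorphism) coincides with the usual order on $\alpha$: indeed, if $x \in L$ and $y \triangleL x$ with witness $M \in \cL$ such that $y \in M$ and $x \notin M$, then the nest forces $M \subseteq L$ (the alternative $L \subseteq M$ would place $x$ in $M$), whence $y \in L$. Being nonempty, compact and clopen, $L$ admits a $\triangleL$-maximum, so $L = [0,\gamma_L]$ for some $\gamma_L < \alpha$---the strict inequality coming from $|L|<|X|$ ruling out $L = X$.

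Now $|\gamma_L| \leq |L| < |\alpha|$, and since $|\alpha|$ is by definition an initial ordinal, this forces $\gamma_L < |\alpha|$. On the other hand, scattering applied to each singleton shows $\alpha = \bigcup_{L \in \cL}[0,\gamma_L]$; the possibility of a successor $\alpha = \delta+1$ is ruled out, because scattering $\{\delta\}$ would demand some $L \ni \delta$ with $\gamma_L = \delta$, giving $L = [0,\delta] = \alpha$ and violating $|L|<|X|$. Hence $\alpha$ is a limit and $\alpha = \sup_L \gamma_L \leq |\alpha| \leq \alpha$, so $\alpha = |\alpha|$ is a cardinal.

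For the $\omega_1$ refinement, specialise: if $X$ is uncountable and each $L \in \cL$ is countable, then $|L| \leq \aleph_0 < \aleph_1 \leq |X|$, so the first equivalence yields $X \cong |X| = \kappa$ for some infinite cardinal $\kappa \geq \omega_1$; the same analysis now gives $\gamma_L < \omega_1$ for every $L$, hence $\kappa = \sup_L \gamma_L \leq \omega_1$, forcing $\kappa = \omega_1$. The hardest piece of the argument is the bridging step between the abstract nest axioms and the concrete initial-segment picture in $\alpha$---namely, identifying each nonempty $L$ with some $[0,\gamma_L]$, $\gamma_L < |X|$---after which the cardinality bookkeeping (converting $|\gamma_L|<|\alpha|$ into $\gamma_L<|\alpha|$ via initial-ordinal-ness of $|\alpha|$) falls out almost automatically.
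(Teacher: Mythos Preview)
The paper states Corollary~\ref{corollary - omega1} without proof; it is quoted from \cite{Good-Papadopoulos} as part of the survey, so there is no ``paper's own proof'' to compare against. Your argument is a correct and natural derivation of the corollary from Theorem~\ref{theorem - ordinal scatterness nest}(5) together with Theorem~\ref{theorem - triangleL is a well-order}, and it follows exactly the line one would expect the original authors to have in mind: pass to an ordinal via condition~(5), recognise each nonempty $L\in\cL$ as a compact clopen \emph{initial} segment $[0,\gamma_L]$ of the well-order $\triangleL$, convert the cardinality bound $|L|<|X|$ into $\gamma_L<|\alpha|$, and take a supremum.

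One small point deserves a sentence of justification. You assert that under the homeomorphism $X\cong\alpha$ the order $\triangleL$ coincides with the ordinal order on $\alpha$. This is true, but it is not literally the content of condition~(5); rather, it comes from the \emph{proof} of Theorem~\ref{theorem - ordinal scatterness nest} (via conditions~(2) or~(4) and Theorem~\ref{theorem - them Dalen-Wattel}), which shows that the given topology on $X$ is precisely the $\triangleL$-order topology, so that the order-isomorphism $(X,\triangleL)\cong(\alpha,\in)$ is the homeomorphism in question. Once that is said, your use of compactness to extract $\gamma_L=\max L$, and the closing computation $\alpha=\sup_L\gamma_L\le|\alpha|\le\alpha$, are clean. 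Your handling of the $\omega_1$ case is likewise correct. (Your tacit restriction to \emph{infinite} cardinals in the forward direction is also appropriate: as stated, the corollary is vacuous or fails for finite nonzero cardinals, so ``cardinal'' should be read as ``infinite cardinal''.)
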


\subsection{A Generalization of the Orderability Problem.}

In \cite{On-The-Orderability-Thm}, we restated Theorem \ref{theorem - them Dalen-Wattel}
via the interval topology, in the corollary that follows.

\begin{corollary}
A topological space $(X,\cT)$ is:
\begin{enumerate}

\item a LOTS, iff there exists a nest $\cL$ on $X$, such that $\cL$ is $T_0$-separating and interlocking
and also $\cT = \cT_{in}^{\triangleLq}$.

\item a GO-space, iff there exists a nest $\cL$ on $X$, such that $\cL$ is $T_0$-separating
and also $\cT = \cT_{in}^{\triangleLq}$.

\end{enumerate}
\end{corollary}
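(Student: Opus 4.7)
The approach is to translate Theorem~\ref{theorem - them Dalen-Wattel} from its two-nest formulation into the one-nest formulation stated in the corollary, using the order-nest dictionary of Theorems~\ref{theorem - for Theorem Dalen-Wattel} and \ref{theorem-preliminary1}, together with Theorem~\ref{theorem - before Lema for Theorem Dalen-Wattel} for the interlocking clause. The observation driving the reduction is that a $T_0$-separating nest $\cL$ already encodes the linear order $\triangleLq$ (by Theorem~\ref{theorem - for Theorem Dalen-Wattel}(4)), which in turn canonically supplies a ``dual'' family of sets playing the role of the $\cR$ in van Dalen and Wattel's theorem; the interval topology $\cT_{in}^{\triangleLq}$ is precisely what one obtains from this canonical pair as a subbase.

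For the ($\Leftarrow$) direction of (2), I would fix a $T_0$-separating nest $\cL$ with $\cT=\cT_{in}^{\triangleLq}$ and introduce the canonical pair $\cL^{*}=\{\{y:y\triangleL a\}:a\in X\}$ and $\cR^{*}=\{\{y:a\triangleL y\}:a\in X\}$, which by definition of the interval topology form a subbase for $\cT_{in}^{\triangleLq}$. By direct inspection these are nests of $\cT$-open sets, both $T_0$-separating, and their induced orders match $\triangleL$; Theorem~\ref{theorem-preliminary1} then yields that $\cL^{*}\cup\cR^{*}$ is $T_1$-separating, and Theorem~\ref{theorem - them Dalen-Wattel}(2) delivers the GO-space conclusion. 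For the ($\Rightarrow$) direction, I would apply Theorem~\ref{theorem - them Dalen-Wattel}(2) to obtain a two-nest subbase $\cL\cup\cR$; Theorem~\ref{theorem-preliminary1} makes $\cL$ $T_0$-separating, and matching $\cT$ with $\cT_{in}^{\triangleLq}$ proceeds by showing each $L\in\cL$ is a $\triangleL$-initial segment---the nest property forces any witness of $y\triangleL x\in L$ to be nested inside $L$---so that $L$ is the union of the open $\triangleL$-rays $\{z:z\triangleL a\}$ over $a\notin L$, with a symmetric argument for $\cR$.

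Part (1) follows the same template, with interlocking threaded in. On the ($\Leftarrow$) side, interlocking of $\cL$ propagates, via Theorem~\ref{theorem - before Lema for Theorem Dalen-Wattel}, to interlocking of the canonical pair $\cL^{*},\cR^{*}$, and Theorem~\ref{theorem - them Dalen-Wattel}(3) then supplies the LOTS conclusion; on the ($\Rightarrow$) side, the same theorem yields interlocking $\cL,\cR$ and the translation above matches $\cT$ with $\cT_{in}^{\triangleLq}$. The main obstacle is that members of $\cL$ need not be literal rays, so the interlocking clause for $\cL$ is not a priori the same as the one for the corresponding rays in $\cL^{*}$; Lemma~\ref{Lemma - interl via min max}, which recasts interlocking purely order-theoretically as a min/max condition on $L$ and $X-L$, is exactly the bridge needed to transfer the property between $\cL$ and the canonical pair and hence to close the LOTS characterisation.
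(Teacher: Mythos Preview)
The paper does not supply a proof of this corollary; it is quoted from \cite{On-The-Orderability-Thm} as a restatement of Theorem~\ref{theorem - them Dalen-Wattel}, so there is no in-paper argument against which to compare. Your overall plan---manufacture the canonical ray-nests $\cL^{*},\cR^{*}$ from the linear order $\triangleL$ and feed them into Theorem~\ref{theorem - them Dalen-Wattel}---is the natural one and handles the $(\Leftarrow)$ directions cleanly. (In fact $\cL^{*}$ and $\cR^{*}$ are \emph{always} interlocking, since $X\setminus\{z:z\triangleL a\}$ has $\triangleL$-minimum $a$; so in $(\Leftarrow)$ of (1) the interlocking hypothesis on $\cL$ is not even needed for that step.)

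There is, however, a genuine error in your $(\Rightarrow)$ sketch. You claim that each $L\in\cL$, being a $\triangleL$-initial segment, equals $\bigcup_{a\notin L}\{z:z\triangleL a\}$. That union is typically \emph{larger} than $L$: any $b\notin L$ with $b\triangleL a$ for some $a\notin L$ lies in it. What is true is $L=\bigcap_{a\notin L}\{z:z\triangleL a\}$, and an intersection of open rays need not be open in $\cT_{in}^{\triangleLq}$. For part~(1) the repair is precisely the interlocking hypothesis you already have: by Theorem~\ref{theorem - before Lema for Theorem Dalen-Wattel}, either $L$ has no $\triangleL$-maximum, whence $L=\bigcup_{x\in L}\{z:z\triangleL x\}$ is a union of open rays, or $X\setminus L$ has a $\triangleL$-minimum $m$, whence $L=\{z:z\triangleL m\}$ is itself an open ray; combined with Theorem~\ref{theorem - them Dalen-Wattel}(1) this gives $\cT=\cT_{in}^{\triangleLq}$.

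For part~(2) no such repair is available, and the obstruction is intrinsic. Since $\triangleL$ is a linear order (Theorem~\ref{theorem - for Theorem Dalen-Wattel}(4)), $\cT_{in}^{\triangleLq}$ is just the $\triangleL$-order topology, so the condition $\cT=\cT_{in}^{\triangleLq}$ already says that $(X,\cT)$ is a LOTS with respect to $\triangleL$. A GO-space that is not a LOTS---the Sorgenfrey line, for instance---can satisfy this for no $T_0$-separating nest $\cL$ whatsoever. Thus either the interval topology in \cite{On-The-Orderability-Thm} is defined in a nonstandard way not reproduced here, or part~(2) as transcribed needs amendment; in either case your argument cannot close the $(\Rightarrow)$ direction of~(2) as stated, and the failure is in the statement rather than in your method.
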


An answer to the following question will give a weaker orderability theorem.

{\bf Question:} Let $X$ be a set equipped with a transitive relation $<$ and the interval topology $\cT_{in}^\le$,
 defined via $\le$, where $\le$ is $<$ plus reflexivity. Under which necessary and sufficient conditions will $\cT_{<}$ be equal to $\cT_{in}^{\le}$?

%%%%%%%
\section{Some New Thoughts.}
\label{sec:4}
%%%%%%%

%%%
\subsection{Connectedness and Orderability.}
%%%

In this section we give a characterization of interlockingness via connectedness. This will
give a condition for a connected space to be LOTS.

\begin{definition}\label{definition-dense}
A partial order $<$, on a set X, is said to be dense if, for all $x$ and $y$ in
$X$ for which $x < y$, there exists some $z$ in $X$, such that $x < z < y$.
\end{definition}

So, given Definition \ref{definition-dense}, the next lemma follows naturally.

\begin{lemma}-
Let $X$ be a set and let $\cL$ be a nest on $X$. Then, the order $\triangleL$ is dense in $X$, if and only if for every $x,y \in X$, $x \neq y$, there exist $L,M \in \cL$,
$L \subsetneq M$, such that $x \in L$ and $y \notin M$ or $y \in L$ and $x \notin M$.
\end{lemma}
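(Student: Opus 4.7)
The plan is to prove both directions by directly unwinding the definitions, with the nest property of $\cL$ doing the structural work in each step.

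For the ($\Rightarrow$) direction, assume $\triangleL$ is dense and let $x,y \in X$ be distinct with, say, $x \triangleL y$ (the case $y \triangleL x$ is symmetric). By density, pick $z \in X$ with $x \triangleL z \triangleL y$, and choose witnesses $L, M \in \cL$ with $x \in L$, $z \notin L$ and $z \in M$, $y \notin M$. Since $\cL$ is a nest, $L$ and $M$ are $\subseteq$-comparable; the point $z \in M \setminus L$ forces $M \not\subseteq L$, hence $L \subsetneq M$. This $(L,M)$ witnesses the first alternative in the claimed condition.

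For the ($\Leftarrow$) direction, assume the disjunctive condition holds and suppose $x \triangleL y$. Apply the hypothesis to the pair $x,y$ to get $L \subsetneq M$ in $\cL$ satisfying one of the two alternatives. The alternative ``$y \in L$ and $x \notin M$'' would give $y \triangleL x$ through the set $M$ (since $y \in L \subseteq M$ and $x \notin M$); combined with $x \triangleL y$, this contradicts part (2) of Theorem~\ref{theorem - for Theorem Dalen-Wattel}. So the valid case is $x \in L$ and $y \notin M$. Pick any $z \in M \setminus L$, which is nonempty since $L \subsetneq M$. Then $x \in L$ and $z \notin L$ yield $x \triangleL z$, while $z \in M$ and $y \notin M$ yield $z \triangleL y$, proving density.

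The main obstacle is really just interpretive rather than technical: the definition of density only constrains $\triangleL$-comparable pairs, while the disjunctive condition in the statement ranges over all distinct $x,y$. In a nest, two distinct $\triangleL$-incomparable points cannot be separated in the way the hypothesis demands (again by Theorem~\ref{theorem - for Theorem Dalen-Wattel}(2)), so the biconditional tacitly presupposes that $\cL$ is $T_0$-separating, or equivalently that it is to be read over $\triangleL$-comparable pairs. Once this is kept in mind, both directions are short, and the only substantive moves are (i) using nest-comparability to promote $z \in M \setminus L$ into $L \subsetneq M$ in the forward direction, and (ii) using the nest antisymmetry of part (2) of Theorem~\ref{theorem - for Theorem Dalen-Wattel} to rule out the ``wrong'' alternative in the converse.
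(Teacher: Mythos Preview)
The paper does not actually supply a proof of this lemma: it merely says, just before the statement, that ``the next lemma follows naturally'' from Definition~\ref{definition-dense}. Your argument is a correct and clean way of spelling out what the paper leaves implicit. Both directions are handled properly: in the forward direction you correctly use the nest property to upgrade the two separating witnesses to a strictly nested pair $L\subsetneq M$, and in the converse you correctly eliminate the ``wrong'' alternative via the antisymmetry clause of Theorem~\ref{theorem - for Theorem Dalen-Wattel}(2) before producing the interpolating point $z\in M\setminus L$.

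Your closing remark is also on target and worth keeping: as literally stated, the biconditional fails without the tacit assumption that $\cL$ is $T_0$-separating (equivalently, that the condition is read over $\triangleL$-comparable pairs). If $x\neq y$ lie in exactly the same members of $\cL$, then density imposes no constraint on this pair, yet the right-hand condition demands $L\subsetneq M$ with $x\in L$ and $y\notin M$ (or symmetrically), which is impossible. In the paper's context this is harmless, since the lemma is only invoked under hypotheses (a $T_1$-separating union $\cL\cup\cR$) that force $\cL$ to be $T_0$-separating; but you are right to flag it.
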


\begin{proposition}
Let $X$ be a set and let $\mathcal{L},\mathcal{R}$ be two nests
of open sets on $X$, such that $\cL \cup \cR$ creates a $T_1$-separating subbase for a topology on $X$. If $X$ is connected, with respect to the topology that is induced by the union of $\cL$ and $\cR$, then $\triangleL$ is dense in $X$.
\end{proposition}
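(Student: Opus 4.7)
The plan is to argue the contrapositive, exhibiting a disconnection of $X$ whenever $\triangleL$ fails to be dense. The preliminary observation, extracted from Theorem \ref{theorem-preliminary1} and Theorem \ref{theorem - for Theorem Dalen-Wattel}, is that the hypotheses force $\cL$ and $\cR$ to be $T_0$-separating with $\triangleL = \triangleR$ a genuine linear order, so either nest may be used to capture the same order.

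Suppose, toward a contradiction, that there exist $a,b \in X$ with $a \triangleL b$ but with no $z \in X$ satisfying $a \triangleL z \triangleL b$. I would then partition $X$ as $U \cup V$, where
\[ U = \{x \in X : x \triangleL b\} \quad\text{and}\quad V = \{x \in X : a \triangleL x\}. \]
Linearity of $\triangleL$ together with the absence of a point strictly between $a$ and $b$ gives $U \cup V = X$; antisymmetry of $\triangleL$ gives $U \cap V = \emptyset$; and clearly $a \in U$ and $b \in V$, so both are nonempty. Hence, if I can show that both $U$ and $V$ are open, then $\{U,V\}$ is a nontrivial open partition of $X$, contradicting the hypothesis of connectedness and completing the proof.

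Showing that both sets are open is the heart of the argument, and is where I expect the only real work to lie. For $U$ this is immediate from the definition of $\triangleL$: $U = \bigcup\{L \in \cL : b \notin L\}$, a union of open sets. The temptation to do the symmetric thing for $V$ using $\cL$ alone fails, because $\{x : a \triangleL x\}$ unfolded through $\cL$ is a union of complements of elements of $\cL$, and so a union of \emph{closed} sets. This is precisely where the second nest $\cR$ is essential: invoking $\triangleL = \triangleR$, one expresses
\[ V = \{x \in X : a \triangleR x\} = \bigcup\{R \in \cR : a \notin R\}, \]
which is now a union of elements of $\cR$ and hence open. Thus the structural role of the symmetric hypothesis on $\cL$ and $\cR$ is to expose both rays around the supposed gap as unions of available open subbasic sets, and it is the point on which the entire argument pivots.
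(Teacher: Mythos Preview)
Your argument is correct and takes the same contrapositive route as the paper: from a gap $a \triangleL b$ with $(a,b)=\emptyset$, produce a nontrivial open partition of $X$. The paper's execution is slightly leaner --- it picks a single $L \in \cL$ with $a \in L$, $b \notin L$ and a single $R \in \cR$ with $b \in R$, $a \notin R$, then asserts $L \cup R = X$ and $L \cap R = \emptyset$ directly --- whereas you form the full rays $U$ and $V$ as unions of subbasic sets; the underlying idea is identical. One small slip in your write-up: the disjointness $U \cap V = \emptyset$ comes from the gap hypothesis (any $x \in U \cap V$ would satisfy $a \triangleL x \triangleL b$), not from antisymmetry, while linearity alone already suffices for $U \cup V = X$.
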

\begin{proof}
Suppose $\triangleL$ is not dense. Then, there exist $x,y \in X$, such that $(x,y) = \emptyset$. So,
there exists $L \in \cL$, such that $x \in L$ and $y \notin L$ and there also exists $R \in \cR$,
such that $x \notin R$ and $y \in R$ and also $L \cap R = \emptyset$ and $L \cup R = X$.
So, $X$ is not connected.

\end{proof}

In Theorem \ref{theorem - before Lema for Theorem Dalen-Wattel} we described interlocking nests, in terms of maximal and minimal elements. Here we
use this result, in order to give a characterization of connected spaces via nests.

\begin{theorem}\label{theorem - LOTS via connected}
Let $X$ be a set and let $\mathcal{L},\mathcal{R}$ be two nests of open sets
on $X$, such that $\cL \cup \cR$ creates a $T_1$-separating subbase for a topology on $X$. If
$X$ is connected, with respect to the topology with subbase $\cL \cup \cR$, then $\cL$ and $\cR$ are interlocking nests.
\end{theorem}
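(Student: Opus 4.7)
The plan is to prove the contrapositive for $\cL$: assuming $\cL$ fails to be interlocking, I will exhibit a disconnection of $X$. Since $\cL$ and $\cR$ enter the hypothesis symmetrically, the identical argument with the roles of the two nests exchanged will then dispose of $\cR$.

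First I will unpack the hypotheses through the earlier machinery. Theorem~\ref{theorem-preliminary1} converts ``$\cL \cup \cR$ is $T_1$-separating'' into the stronger statement that both nests are $T_0$-separating and $\triangleL = \triangleR$, so Theorem~\ref{theorem - for Theorem Dalen-Wattel} tells me $\triangleL$ is a transitive linear order on $X$. Applying the failure direction of Theorem~\ref{theorem - before Lema for Theorem Dalen-Wattel} to the non-interlocking nest $\cL$ then yields a witness $L \in \cL$ that admits a $\triangleL$-maximum $x$ while $X-L$ has no $\triangleL$-minimum. Combining maximality of $x$ with the $T_0$-separating nest property lets me identify $L$ with the down-set $\{z\in X : z \triangleLq x\}$, and therefore $X-L$ with $\{z\in X : x \triangleL z\}$.

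The key step is to show that $X-L$ is open, since $L$ is open by hypothesis and so the pair $L, X-L$ would then be a disconnection of $X$. To build an open neighbourhood of an arbitrary $y \in X-L$ lying inside $X-L$, I will use the absence of a $\triangleL$-minimum to pick $y' \in X-L$ with $y' \triangleL y$, transport this through the equality $\triangleL = \triangleR$ to obtain $R \in \cR$ with $y \in R$ and $y' \notin R$, and then check $R \subseteq X-L$ by a short transitivity argument: any $z \in R$ satisfies $y' \triangleR z$ and hence $y' \triangleL z$; combined with $x \triangleL y'$ (because $y' \notin L$ and $x$ is the maximum of $L$) and transitivity of $\triangleL$, this gives $x \triangleL z$, so $z \in X-L$.

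I expect the main obstacle to be careful bookkeeping in two respects. First, one must consistently use the convention for $\triangleR$ that makes the equality $\triangleL = \triangleR$ of Theorem~\ref{theorem-preliminary1} literally hold, and must resist the temptation to quote a property of $\cL$ while arguing about $\cR$. Second, there is the edge case $L = X$, in which $X-L = \emptyset$ makes the disconnection argument vacuous; I will either appeal to the convention, implicit in the LOTS formulation of Theorem~\ref{theorem - them Dalen-Wattel}, that the nests in play are collections of proper open subsets of $X$, or note that under the standing hypotheses a proper witness $L$ can always be selected from the non-interlocking instances produced by Theorem~\ref{theorem - before Lema for Theorem Dalen-Wattel}.
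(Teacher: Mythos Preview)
Your argument is correct and follows essentially the same route as the paper's proof: both take the contrapositive, invoke Theorem~\ref{theorem - before Lema for Theorem Dalen-Wattel} to obtain a witness $L=(-\infty,x]$ with $X-L$ lacking a $\triangleL$-minimum, and then cover $X-L$ by sets from $\cR$ chosen via intermediate points to produce a disconnection. Your version is more explicit in citing Theorems~\ref{theorem-preliminary1} and~\ref{theorem - for Theorem Dalen-Wattel} and in flagging the $L=X$ edge case, but the underlying idea is identical.
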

\begin{proof}
If $\cL$ is not interlocking then, according to Theorem \ref{theorem - before Lema for Theorem Dalen-Wattel},
there exists $L \in \cL$, such that $L= (-\infty,x]$, but $X-L$ has no minimal element. The
set $L$ is open, as a subbasic element for the topology that is generated by $\cL \cup \cR$. So, for every $z \in X-L$, there exists
$z'$, such that $x \triangleL z' \triangleL z$. But, there exists $R_z \in \cR$, such that $z' \notin R_z$
and $z \in R_z$. So, $X-L = \bigcup_{z \notin L}R_z$, i.e. $R_z \cap L = \emptyset$. Thus,
$X-L$ is open and $L$ is open, hence $X$ is not connected. In a similar way, $\cR$ is interlocking, too.
\end{proof}

Theorem \ref{theorem - LOTS via connected}
permits us now to view LOTS, in the light of connectedness.

\begin{corollary}\label{corollary-connected char}
Let $X$ be a set and let $\mathcal{L},\mathcal{R}$ be two nests of open sets
on $X$, such that $\cL \cup \cR$ creates a $T_1$-separating subbase for a topology on $X$. If
$X$ is connected with respect to the topology with subbase $\cL \cup \cR$, then $X$ is a LOTS.
\end{corollary}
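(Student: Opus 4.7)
The plan is to observe that this corollary is essentially a direct synthesis of Theorem \ref{theorem - LOTS via connected} and part (3) of van Dalen and Wattel's characterization, Theorem \ref{theorem - them Dalen-Wattel}. All the substantive work has already been done in the preceding theorem; the corollary's job is simply to bundle those conclusions with the classical LOTS criterion.

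First I would invoke Theorem \ref{theorem - LOTS via connected} directly. The hypotheses of that theorem and of the corollary are identical: two nests $\cL$ and $\cR$ of open sets, whose union $T_1$-separates $X$ and forms a subbase for the topology, together with connectedness. Its conclusion gives me precisely what I need: both $\cL$ and $\cR$ are interlocking.

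Next I would feed this conclusion into Theorem \ref{theorem - them Dalen-Wattel}(3). That part of the characterization asserts that $X$ is a LOTS if and only if there are two interlocking nests of open sets whose union is $T_1$-separating and forms a subbase for $\cT$. Every clause of the ``if'' direction is now verified: $\cL$ and $\cR$ are nests of open sets (by hypothesis), both are interlocking (from the previous step), and their union is a $T_1$-separating subbase for $\cT$ (by hypothesis). Therefore $X$ is a LOTS.

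There is essentially no obstacle here; the proof is a one-line citation. The only conceptual point worth underlining is that connectedness has been used exactly to upgrade the GO-space conclusion of Theorem \ref{theorem - them Dalen-Wattel}(2) to the LOTS conclusion of Theorem \ref{theorem - them Dalen-Wattel}(3), by supplying the interlocking condition automatically via Theorem \ref{theorem - LOTS via connected}. It might also be worth remarking, for the reader, that this fits naturally with the well-known fact that a connected GO-space is automatically a LOTS, of which this result is a nest-theoretic incarnation.
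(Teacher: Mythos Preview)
Your proposal is correct and matches the paper's own proof exactly: the paper simply says the result follows immediately from Theorem \ref{theorem - them Dalen-Wattel} and Theorem \ref{theorem - LOTS via connected}, and you have spelled out precisely how, invoking Theorem \ref{theorem - LOTS via connected} for interlockingness and then part (3) of Theorem \ref{theorem - them Dalen-Wattel} for the LOTS conclusion.
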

\begin{proof}
The proof follows immediately from the statements of Theorem \ref{theorem - them Dalen-Wattel}
and Theorem \ref{theorem - LOTS via connected}.
\end{proof}

\subsection{Powers of LOTS.}

Let $I$ be a set of indices. Let $X$ be a LOTS and let $\pi$ its $i$-th canonical projection. Here we examine properties of powers of LOTS,
linking $X$ with $X^I$ via projections.

\begin{proposition}\label{Nest-from plane to line}
Let $X$ be a LOTS and let $\mathcal{L}_{X^I}$ be a nest on $X^I$. Then,
$\pi_i(\mathcal{L}_{X^I}) =
\{\pi_i(L) : L \in \mathcal{L}_{X^I}\}$ will be a nest on $X$,
for every $i \in I$.
\end{proposition}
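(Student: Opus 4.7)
The plan is to verify the nest condition directly from its set-theoretic definition, by showing that projection, as a function, preserves the ordering of subsets by inclusion. Recall that a collection $\mathcal{L}_{X^I} \subseteq \mathcal{P}(X^I)$ being a nest means that for any two members $L, L' \in \mathcal{L}_{X^I}$, one has either $L \subseteq L'$ or $L' \subseteq L$. So the whole task reduces to showing that comparability is transferred from $X^I$ to $X$ by $\pi_i$.

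First, I would fix $i \in I$ and pick two arbitrary elements of $\pi_i(\mathcal{L}_{X^I})$, say $\pi_i(L)$ and $\pi_i(L')$ with $L, L' \in \mathcal{L}_{X^I}$. By the nest property of $\mathcal{L}_{X^I}$, I may assume without loss of generality that $L \subseteq L'$. Then I invoke the elementary fact that images preserve inclusions: for any function $f \colon A \to B$ and any subsets $S \subseteq T \subseteq A$, one has $f(S) \subseteq f(T)$. Applied to $\pi_i$, this gives $\pi_i(L) \subseteq \pi_i(L')$. In the symmetric case I obtain the reverse inclusion, so in either case $\pi_i(L)$ and $\pi_i(L')$ are comparable under $\subseteq$.

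Since $\pi_i(L)$ and $\pi_i(L')$ were arbitrary, the collection $\pi_i(\mathcal{L}_{X^I})$ is totally ordered by inclusion, hence a nest on $X$. Repeating the argument for each $i \in I$ finishes the proof.

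There is effectively no obstacle: the argument uses nothing beyond the definitions of nest and image. In particular, neither the hypothesis that $X$ be a LOTS, nor any topological structure on $X^I$, nor any openness/closedness assumption on the members of $\mathcal{L}_{X^I}$, is needed for the bare statement that $\pi_i(\mathcal{L}_{X^I})$ is a nest; the LOTS assumption on $X$ simply sets the context for subsequent results in which $\pi_i(\mathcal{L}_{X^I})$ will be combined with the order structure of $X$.
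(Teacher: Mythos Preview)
Your proof is correct and follows essentially the same approach as the paper's own proof: pick two members of the projected family, use the nest property upstairs, and push the inclusion down via the monotonicity of images. Your additional remark that the LOTS hypothesis on $X$ plays no role in this particular argument is accurate and worth noting.
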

\begin{proof}
Let $\pi_i(L_1),\pi_i(L_2) \in \pi_i(\mathcal{L}_{X^I})$, where $L_1,L_2 \in \mathcal{L}_{X^I}$. Then, $L_1 \subset L_2$ or
$L_2 \subset L_1$, which implies that $\pi_i(L_1) \subset \pi_i(L_2)$ or $\pi_i(L_2) \subset \pi_i(L_1)$,
proving that $\pi_i(\mathcal{L}_{X^I})$ is a nest, too.
\end{proof}

\begin{proposition}\label{Nest-From line to plane}
Let $\mathcal{L}_X$ be a nest on $X$. Then,
$\pi_i^{-1}(\mathcal{L}_X) = \{\pi_i^{-1}(L) : L \in \mathcal{L}_X\}$ will be
a nest on $X^I$, for every $i \in I$.
\end{proposition}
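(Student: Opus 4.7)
The plan is to mirror the argument for Proposition \ref{Nest-from plane to line}, using the fact that preimages under any function preserve inclusion. A nest is, by definition, a family of sets that is totally ordered by $\subset$, so to verify that $\pi_i^{-1}(\mathcal{L}_X)$ is a nest on $X^I$ it suffices to compare two arbitrary elements of it.

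First I would pick two elements $A, B \in \pi_i^{-1}(\mathcal{L}_X)$ and pull them back to the defining data: by construction there exist $L_1, L_2 \in \mathcal{L}_X$ with $A = \pi_i^{-1}(L_1)$ and $B = \pi_i^{-1}(L_2)$. Since $\mathcal{L}_X$ is a nest on $X$, we may assume without loss of generality that $L_1 \subset L_2$. The next step is the monotonicity observation: for any map $f$, $E \subset F$ implies $f^{-1}(E) \subset f^{-1}(F)$, because $x \in f^{-1}(E)$ means $f(x) \in E \subset F$, hence $x \in f^{-1}(F)$. Applied to $\pi_i$, this yields $\pi_i^{-1}(L_1) \subset \pi_i^{-1}(L_2)$, i.e.\ $A \subset B$.

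Since the two elements were arbitrary, the collection $\pi_i^{-1}(\mathcal{L}_X)$ is totally ordered by inclusion, which is the defining property of a nest. The argument works uniformly for every index $i \in I$, so the conclusion holds for each canonical projection.

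There is essentially no obstacle here: the proof is a one-line consequence of the monotonicity of the preimage operation, and it does not even require $\mathcal{L}_X$ to be a nest of open sets or to have any topological content, only that it be linearly ordered by $\subset$. The only point worth flagging in the write-up is that we do not need the projection to be injective or surjective for the argument to go through; this is precisely why the preimage version is even cleaner than the direct image statement of Proposition \ref{Nest-from plane to line}.
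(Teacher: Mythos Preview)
Your argument is correct and matches the paper's own proof essentially line for line: pick two preimages, use that $\mathcal{L}_X$ is a nest to compare $L_1$ and $L_2$, and conclude via monotonicity of $\pi_i^{-1}$. The only cosmetic difference is that you invoke ``without loss of generality'' where the paper writes out both cases $L_1 \subset L_2$ and $L_2 \subset L_1$ explicitly.
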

\begin{proof}
Let $\pi_i^{-1}(L_1),\pi_i^{-1}(L_2) \in \pi_i^{-1}(\mathcal{L}_X)$. Since
$\mathcal{L}_X$ is a nest, then either $L_1 \subset L_2$ or $L_2 \subset L_1$. If $L_1 \subset L_2$, then
$\pi_i^{-1}(L_1) \subset \pi_i^{-1}(L_2)$, and if $L_2 \subset L_1$, then $\pi_i^{-1}(L_2) \subset \pi_i^{-1}(L_1)$.
Thus, $\pi_i^{-1}(\mathcal{L}_X)$ will be a nest, too.
\end{proof}

\begin{definition}\label{Weak-T0}
Let $X$ be a set, and let $\mathcal{L}_{X^I}$ be a nest on $X^I$,
satisfying the condition that if $(x_i)_{i \in I},(y_i)_{i \in I} \in X^I$, such that $x_j \neq y_j$, $j \in I$, then
there exists $L \in \mathcal{L}_{X^I}$, such that $(x_i)_{i \in I} \in L$ and $(y_i)_{i \in I} \notin L$
or $(y_i)_{i \in I} \in L$ and $(x_i)_{i \in I} \notin L$.
Then, we say that the nest $\mathcal{L}_{X^I}$ is {\em weakly $T_0$-separating}, with respect to the $j$-th variable.
\end{definition}

\begin{definition}
Let $X$ be a set and let $\mathcal{L}_{X^I}$ be a nest on $X^I$. Let also
$(x_i)_{i \in I}$, $(y_i)_{i \in I}$, be such that $x_j \neq y_j$, for a fixed $j \in I$. Then, we define $(x_i)_{i \in I} \triangleleft_{\mathcal{L}_{X^I}} (y_i)_{i \in I}$,
if there exists a set $L \in \mathcal{L}_{X^I}$, such that
$(x_i)_{i \in I} \in L$ and $(y_i)_{i \in I} \notin L$.
\end{definition}

%\begin{proposition}
%If $\mathcal{L}_{\Pi X_i}$ is a weakly $T_0$-separating nest, then this is equivalent
%to say that $\triangleleft_{\mathcal{L}_{\Pi X_i}}$ is an ordering in the plane.
%\end{proposition}

\begin{theorem}\label{Theorem: From plane to line}
If $\mathcal{L}_{X^I}$ is a weakly $T_0-$separating nest on $X^I$, with respect
to the $j$-th variable, such that it satisfies the condition that if $(x_i)_{i \in I} \notin L \in \mathcal{L}_{X^I}$,
then $x_j \notin \pi_j(L)$, then
$\pi_j(\mathcal{L}_{X^I}) = \{\pi_j(L) : L \in \mathcal{L}_{X^I}\}$ is a
$T_0$-separating nest on $X$.

\end{theorem}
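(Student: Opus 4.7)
The plan is to split the claim into two parts: that $\pi_j(\mathcal{L}_{X^I})$ is a nest, and that it is $T_0$-separating. The first half is immediate from Proposition \ref{Nest-from plane to line}, applied to the index $i=j$, so the real work is in verifying the $T_0$-separation.

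To verify $T_0$-separation, I would fix two distinct points $x,y \in X$ and try to find a member of $\pi_j(\mathcal{L}_{X^I})$ that contains one and not the other. The natural move is to lift $x$ and $y$ to $X^I$: pick any tuples $(x_i)_{i\in I}$ and $(y_i)_{i\in I}$ with $x_j=x$, $y_j=y$ (the remaining coordinates are immaterial). Since $x_j\neq y_j$, the hypothesis that $\mathcal{L}_{X^I}$ is weakly $T_0$-separating with respect to the $j$-th variable furnishes some $L \in \mathcal{L}_{X^I}$ that separates these two tuples; without loss of generality $(x_i)\in L$ and $(y_i)\notin L$.

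Projecting, $x = x_j = \pi_j\bigl((x_i)_{i\in I}\bigr)\in \pi_j(L)$, which gives one half of the separation. The other half is exactly where the extra hypothesis of the theorem does its work: from $(y_i)_{i\in I}\notin L$ the assumption forces $y_j \notin \pi_j(L)$, i.e.\ $y\notin \pi_j(L)$. Hence $\pi_j(L)\in \pi_j(\mathcal{L}_{X^I})$ witnesses $T_0$-separation of $x$ and $y$.

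The only subtlety — and what I expect to be the sole obstacle in a careless attempt — is the gap between ``$(y_i)_{i\in I}\notin L$'' and ``$y_j\notin \pi_j(L)$''. In general these are not equivalent, since a point may fail to lie in $L$ while its $j$-th coordinate still appears among the projections of other members of $L$. This is precisely the reason the author imposed the extra condition ``if $(x_i)_{i\in I}\notin L$, then $x_j\notin \pi_j(L)$'', and the proof amounts to observing that under that assumption the projection transports the weak $T_0$-separation on $X^I$ to genuine $T_0$-separation on $X$.
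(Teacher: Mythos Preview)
Your proof is correct and follows essentially the same route as the paper: invoke Proposition~\ref{Nest-from plane to line} for the nest property, then lift two distinct points of $X$ to tuples in $X^I$ differing in the $j$-th coordinate, apply weak $T_0$-separation, and use the extra hypothesis to push the ``not in $L$'' conclusion down through $\pi_j$. Your closing paragraph explaining why the extra hypothesis is indispensable is a nice addition that the paper's proof leaves implicit.
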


\begin{proof}
Proposition \ref{Nest-from plane to line} gives that $\pi_j(\mathcal{L}_{X^I})$ is
a nest.

For proving that $\pi_j(\mathcal{L}_{X^I})$ is $T_0$-separating, let $x_1,x_2 \in X$, such that
$x_1 \neq x_2$. Then, we form $(y_i)_{i \in I}$, $(z_i)_{i \in I}$, so that we place $x_1$ in the $j$-th position of $(y_i)_{i \in I}$ and $x_2$ in the
$j$-th position of $(z_i)_{i \in I}$. The rest $y_i$ and $z_i$ are considered arbitrary.

Since $\mathcal{L}_{X^I}$ is $T_0$-separating, with respect to the $j$-th variable,
then there exists $L \in \mathcal{L}_{X^I}$, such that $(y_i)_{i \in I} \in L$ and
$(z_i)_{i \in I} \notin L$ or $(z_i)_{i \in I} \in L$ and $(y_i)_{i \in I} \notin L$.

So, $\pi_j((y_i)_{i \in I}) = x_1 \in \pi_j(L)$ and $\pi_j((z_i)_{i \in I}) = x_2 \notin \pi_j(L)$
or $\pi_j((z_i)_{i \in I}) = x_2 \in \pi_j(L)$ and $\pi_j((y_i)_{i \in I}) = x_1 \notin \pi_j(L)$,
which proves that $\pi_j(\mathcal{L}_{X^I})$ is $T_0$-separating.
\end{proof}

\begin{remark}
Let $\mathcal{L}_{X^I}$ be a weakly $T_0$-separating nest in $X^I$. Then,
if $(y_i)_{i \in I}$, $(z_i)_{i \in I}$ have in the $j$-th position the elements
$y_j$ and $z_j$, respectively, then $(y_i)_{i \in I} \triangleleft_{\mathcal{L}_{X^I}} (z_i)_{i \in I}$
implies that $y_j \triangleleft_{\pi_j(\mathcal{L}_{X^I})} z_j$.
\end{remark}

\begin{definition}\label{Weak-T1}
Let $X$ be a set and let $\mathcal{L}_{X^I}$, $\mathcal{R}_{X^I}$ be nests
on $X^I$. Then, $\mathcal{L}_{X^I} \cup \mathcal{R}_{X^I}$ will be called {\em weakly $T_1$-separating}, with
respect to the $j$-th variable,
if and only if for every $(x_i)_{i \in I},(y_i)_{i \in I} \in X^I$, such that $x_j \neq y_j$, there exist
$L \in \mathcal{L}_{X^I}$ and $R \in \mathcal{R}_{X^I}$, such that $(x_i)_{i \in I} \in L$ and $(y_i)_{i \in I} \notin L$ and
also $(y_i)_{i \in I} \in R$ and $(x_i)_{i \in I} \notin R$.
\end{definition}

In this case, it is easy to see that $(x_i)_{i \in I} \triangleleft_{\mathcal{L}_{X^I}} (y_i)_{i \in I}$, if and only if
$(y_i)_{i \in I} \triangleleft_{\mathcal{R}_{X^I}} (x_i)_{i \in I}$.

\begin{proposition}\label{Weak-T1 from line to plane}
Let $X$ be a set and let also $\mathcal{L}_X$ and $\mathcal{R}_X$ be two nests on $X$, such that
$\mathcal{L}_X \cup \mathcal{R}_X$ is $T_1$-separating in $X$. Then, $\pi_j^{-1}(\mathcal{L}_X) \cup \pi_j^{-1}(\mathcal{R}_X)$
is weakly $T_1$-separating in $X \times X$, with respect to the $j$-th variable.
\end{proposition}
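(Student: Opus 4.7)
The plan is to reduce the weak $T_1$-separation property on $X \times X$ to the ordinary $T_1$-separation property already assumed on $X$, by pulling back witnesses through the projection $\pi_j$. First I would invoke Proposition \ref{Nest-From line to plane}, which already guarantees that both $\pi_j^{-1}(\mathcal{L}_X)$ and $\pi_j^{-1}(\mathcal{R}_X)$ are nests on $X \times X$, so the only task that remains is to verify the separation clause of Definition \ref{Weak-T1}.

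Next, I would pick two tuples $(x_i)_{i \in I}, (y_i)_{i \in I} \in X \times X$ whose $j$-th coordinates satisfy $x_j \neq y_j$, and apply the hypothesis that $\mathcal{L}_X \cup \mathcal{R}_X$ is $T_1$-separating on $X$ to the pair $x_j, y_j$. This supplies an $L \in \mathcal{L}_X$ with $x_j \in L$ and $y_j \notin L$, together with an $R \in \mathcal{R}_X$ with $y_j \in R$ and $x_j \notin R$. The natural candidates in $X \times X$ are then $\pi_j^{-1}(L) \in \pi_j^{-1}(\mathcal{L}_X)$ and $\pi_j^{-1}(R) \in \pi_j^{-1}(\mathcal{R}_X)$.

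The verification is then just unpacking the definition of preimage: a tuple belongs to $\pi_j^{-1}(L)$ if and only if its $j$-th entry lies in $L$, and analogously for $R$. Consequently $(x_i) \in \pi_j^{-1}(L)$, $(y_i) \notin \pi_j^{-1}(L)$, $(y_i) \in \pi_j^{-1}(R)$ and $(x_i) \notin \pi_j^{-1}(R)$, which is precisely the weak $T_1$-separation condition relative to the $j$-th variable.

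I do not anticipate any real obstacle; the only subtlety worth flagging is matching the scope of the weakened definition to the available hypothesis. Weak $T_1$-separation, as stated in Definition \ref{Weak-T1}, only demands a witness for tuples that already differ in the $j$-th coordinate, which is exactly the single coordinate that the cylinder sets $\pi_j^{-1}(L)$ and $\pi_j^{-1}(R)$ are sensitive to. An ordinary $T_1$-separation on $X \times X$ would not follow from this argument, since two tuples might coincide in the $j$-th coordinate while differing elsewhere, and this is precisely why the conclusion is the weak version and not the full one.
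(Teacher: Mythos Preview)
Your proposal is correct and follows essentially the same route as the paper: pick tuples differing in the $j$-th coordinate, apply the $T_1$-separation hypothesis on $X$ to $x_j$ and $y_j$, and pull the witnesses $L,R$ back through $\pi_j$. The only addition over the paper's argument is your explicit appeal to Proposition~\ref{Nest-From line to plane} for the nest property, which is harmless and arguably makes the verification of Definition~\ref{Weak-T1} more complete.
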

\begin{proof}
Let $(x_i)_{i \in I},(y_i)_{i \in I} \in X^I$, such that $x_j \neq y_j$. Then,
there exist $L \in \mathcal{L}_X$ and $R \in \mathcal{R}_X$, such that $x_j \in L$ and
$y_j \notin L$ and also $y_j \in R$ and $x_j \notin R$, which implies that $(x_i)_{i \in I} \in \pi_j^{-1}(L)$,
$(y_i)_{i \in I} \notin \pi_j^{-1}(R)$,
and also $(y_i)_{i \in I} \in \pi_j^{-1}(R)$ and $(x_i)_{i \in I} \notin \pi_j^{-1}(R)$. Thus, $\pi_j^{-1}(\mathcal{L}_X) \cup \pi_j^{-1}(\mathcal{R}_X)$ is weakly $T_1$-separating, with respect to the $j$-th variable.
\end{proof}

\begin{proposition}
Let $X$ be a set and let $\mathcal{L}_{X^I}$ and $\mathcal{R}_{X^I}$ be two nests in $X^I$, such that $\mathcal{L}_{X^I} \cup \mathcal{R}_{X^I}$ is weakly $T_1$-separating in $X^I$, with respect to the $j$-th variable. Let also $\mathcal{L}_{X^I}$ and $\mathcal{R}_{X^I}$
satisfy the condition that if $(x_i)_{i \in I} \notin L \in \mathcal{L}_{X^I}$, then
$\pi_j((x_i)_{i \in I}) = x_j \notin \pi_j(\mathcal{L}_{X^I})$, and if $(x_i)_{i \in I} \notin R \in \mathcal{R}_{X^I}$,
then $\pi_j((x_i)_{i \in I}) \notin \pi_j(R)$. Then, $\pi_j(\mathcal{L}_{X^I}) \cup \pi_j(\mathcal{R}_{X^I})$ is $T_1$-
separating in $X$.
\end{proposition}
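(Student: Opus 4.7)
The plan is to mimic the strategy used in Theorem \ref{Theorem: From plane to line}, which handled the $T_0$-case, and upgrade it to the $T_1$-setting by invoking simultaneously the $\mathcal{L}_{X^I}$-witness and the $\mathcal{R}_{X^I}$-witness. The two non-membership hypotheses (reading the second ``$\pi_j(\mathcal{L}_{X^I})$'' as the natural $\pi_j(L)$) are exactly what lets one push non-membership down through $\pi_j$, which is normally the troublesome direction, since projections enlarge sets.

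First, I would pick distinct points $x_1, x_2 \in X$ and, exactly as in the proof of Theorem \ref{Theorem: From plane to line}, form two tuples $(y_i)_{i \in I}, (z_i)_{i \in I} \in X^I$ with $y_j = x_1$, $z_j = x_2$, and the remaining coordinates chosen arbitrarily. Since $y_j \neq z_j$, the weak $T_1$-separation of $\mathcal{L}_{X^I} \cup \mathcal{R}_{X^I}$ with respect to the $j$-th variable yields, at the same time, some $L \in \mathcal{L}_{X^I}$ with $(y_i)_{i \in I} \in L$ and $(z_i)_{i \in I} \notin L$, and some $R \in \mathcal{R}_{X^I}$ with $(z_i)_{i \in I} \in R$ and $(y_i)_{i \in I} \notin R$.

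Next, I apply $\pi_j$ to both sides. From $(y_i)_{i \in I} \in L$ I immediately get $x_1 = \pi_j((y_i)_{i \in I}) \in \pi_j(L)$. Then the hypothesis on $\mathcal{L}_{X^I}$, applied to $(z_i)_{i \in I} \notin L$, gives $x_2 = \pi_j((z_i)_{i \in I}) \notin \pi_j(L)$. The symmetric argument on $R$, using the parallel hypothesis on $\mathcal{R}_{X^I}$, yields $x_2 \in \pi_j(R)$ and $x_1 \notin \pi_j(R)$. Since $\pi_j(L) \in \pi_j(\mathcal{L}_{X^I})$ and $\pi_j(R) \in \pi_j(\mathcal{R}_{X^I})$, this is precisely what the definition of $T_1$-separation for $\pi_j(\mathcal{L}_{X^I}) \cup \pi_j(\mathcal{R}_{X^I})$ demands.

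The one step that carries any real content is the transfer of non-membership: one must see that the hypotheses ``$(x_i)_{i\in I} \notin L \Rightarrow x_j \notin \pi_j(L)$'', and its analogue for $R$, are not cosmetic but structurally necessary, because in general a point can lie outside a set yet project into the image of that set. Everything else is a direct transcription of the argument already used in Theorem \ref{Theorem: From plane to line}, so the main obstacle is purely notational, namely keeping the two tuples and the two nests cleanly separated when writing the four containments at once.
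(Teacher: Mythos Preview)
Your proposal is correct and follows essentially the same route as the paper: pick two distinct points in $X$, lift them to tuples differing in the $j$-th slot, invoke weak $T_1$-separation to get $L$ and $R$, and push membership and non-membership through $\pi_j$ using the stated hypotheses. Your write-up is in fact cleaner, since you correctly record $x_2 \notin \pi_j(L)$ where the paper has a typographical slip.
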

\begin{proof}
Let $x_1 \neq x_2$. Then, $(y_i)_{i \in I} \neq (z_i)_{i \in I}$, where $y_j = x_1$, $z_j=x_2$,
and the rest $y_i$ and $z_i$ are arbitrary. Since $\mathcal{L}_{X^I} \cup \mathcal{R}_{X^I}$
is weakly $T_1$-separating, there exist $L \in \mathcal{L}_{X^I}$, $R \in \mathcal{R}_{X^I}$,
such that $(y_i)_{i \in I} \in L$ and $(z_i)_{i \in I} \notin L$ and also $(z_i)_{i \in I} \in R$ and
$(y_i)_{i \in I} \notin R$, which implies that $x_1 \in \pi_j(L)$ and $x_2 \notin \pi_j(R)$ and also
$x_2 \in \pi_j(R)$ and $x_1 \notin \pi_j(R)$.
\end{proof}

%Interlockingness from here and below.%
%\begin{theorem}\label{interlocking-from plane to line}
%Let $\mathcal{L}_{X \times X}$ be a nest in $X \times X$, such that $\mathcal{L}_{X \times X}
%= \{L \times X : L \subset X\}$. If $\mathcal{L}_{X \times X}$ is interlocking, then so is
%the nest $\pi_1(\mathcal{L}_{X \times X}) = \{L \subset X : L \times X \in \mathcal{L}_{X \times X}\}$.
%\end{theorem}
%\begin{proof}
%Let $L \in \pi_1(\mathcal{L}_{X \times X})$ (which implies that
%$L \times X \in \pi_1(\mathcal{L}_{X \times X}$), such that $L = \bigcap\{L' \in
%\pi_1(\mathcal{L}_{X \times X}) : L' \supsetneq L\}$. Then, $L \times X$ will be
%equal to $(\bigcap\{L' \in \pi_1(\mathcal{L}_{X \times X})\} : L' \supsetneq L) \times X$,
%which implies that $L \times X = \bigcap\{L' \times X : L' \in \pi_1(\mathcal{L}_{X \times X}) : L' \times X \supsetneq L \times %X\}$, which implies that $L \times X = \bigcap\{L' \times X : L' \times X \supsetneq L \times X\}$. The latter gives that $L %\times X = \bigcup\{L' \times X : L' \times X \subsetneq L \times X\}$, which implies that $L \times X = (\bigcup\{L' : L' %\subsetneq L\}) \times X$, which implies that $L = \bigcup\{L' : L' \subsetneq L\}$. Thus, $\pi_1(\mathcal{L}_{X \times X})$ is %interlocking.
%\end{proof}

\begin{theorem}
Let $X$ be a set and let $\mathcal{L}_X$ be
an interlocking nest in $X$. Then, $\mathcal{M} = \{\pi_j^{-1}(L): L \in \mathcal{L}_X\}$ will be an interlocking nest in $X^I$.
\end{theorem}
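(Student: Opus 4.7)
The plan is to reduce everything to the interlocking condition of $\mathcal{L}_X$ via the fact that $\pi_j^{-1}$ is a Boolean-operation-preserving injection from $\mathcal{P}(X)$ to $\mathcal{P}(X^I)$ whenever $\pi_j$ is surjective (which holds as long as $X$ and the remaining factors are nonempty, a tacit assumption for $X^I$ to carry a nest at all). Propositions \ref{Nest-From line to plane} already gives that $\mathcal{M}$ is a nest, so only the interlocking property needs to be proved.

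First I would record the key correspondence: since $\pi_j$ is surjective, for all $L,M \subset X$, $L \subsetneq M$ iff $\pi_j^{-1}(L) \subsetneq \pi_j^{-1}(M)$, and for any family $\{L_\alpha\} \subset \mathcal{P}(X)$ one has $\pi_j^{-1}\!\left(\bigcap L_\alpha\right) = \bigcap \pi_j^{-1}(L_\alpha)$ and $\pi_j^{-1}\!\left(\bigcup L_\alpha\right) = \bigcup \pi_j^{-1}(L_\alpha)$. In particular $\pi_j^{-1}$ restricts to a strict-order isomorphism from $(\mathcal{L}_X,\subsetneq)$ onto $(\mathcal{M},\subsetneq)$.

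Next, I would take an arbitrary $T \in \mathcal{M}$ and write $T = \pi_j^{-1}(L)$ for a (unique) $L \in \mathcal{L}_X$. Assume the hypothesis of the interlocking definition: $T = \bigcap\{S \in \mathcal{M} - \{T\} : T \subset S\}$. By the above correspondence the sets $S \in \mathcal{M}$ with $T \subsetneq S$ are exactly the $\pi_j^{-1}(M)$ with $L \subsetneq M$, so
\[
\pi_j^{-1}(L) \;=\; \bigcap_{L \subsetneq M,\, M \in \mathcal{L}_X} \pi_j^{-1}(M) \;=\; \pi_j^{-1}\!\left(\bigcap_{L \subsetneq M,\, M \in \mathcal{L}_X} M\right).
\]
Injectivity of $\pi_j^{-1}$ on subsets then yields $L = \bigcap\{M \in \mathcal{L}_X - \{L\} : L \subset M\}$.

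Now I invoke the hypothesis that $\mathcal{L}_X$ is interlocking to conclude $L = \bigcup\{M \in \mathcal{L}_X - \{L\} : M \subset L\}$. Applying $\pi_j^{-1}$ and using that it preserves arbitrary unions and the strict-inclusion order gives $T = \pi_j^{-1}(L) = \bigcup\{\pi_j^{-1}(M) : M \subsetneq L\} = \bigcup\{S \in \mathcal{M} - \{T\} : S \subset T\}$, which is precisely the conclusion of the interlocking condition for $T$. Since $T$ was arbitrary, $\mathcal{M}$ is interlocking. The only point requiring care, and the one I would flag as the main obstacle, is the injectivity/bijection between $\mathcal{L}_X$ and $\mathcal{M}$ under $\pi_j^{-1}$; once that is nailed down, the argument is a purely formal transport of the interlocking condition from $X$ to $X^I$.
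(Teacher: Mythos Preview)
Your proposal is correct and follows essentially the same route as the paper's proof: assume the intersection hypothesis for $T=\pi_j^{-1}(L)$, push it down to $L$ via the fact that $\pi_j^{-1}$ preserves intersections and strict inclusions, invoke interlocking of $\mathcal{L}_X$ to obtain the union identity for $L$, and lift back to $T$ using that $\pi_j^{-1}$ preserves unions. The paper does exactly this, only writing the preimages explicitly as products $\Pi_i Y_i$ with $Y_i=X$ for $i\neq j$ and $Y_j=L$, whereas you phrase it abstractly in terms of $\pi_j^{-1}$ being an order- and Boolean-operation-preserving injection; the logical skeleton is identical, and your explicit flagging of the injectivity of $\pi_j^{-1}$ (equivalently, surjectivity of $\pi_j$) is a point the paper uses silently.
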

\begin{proof}
Suppose $M \in \mathcal{M}$ be such that $M
  = \bigcap\{M' \in \mathcal{M}: M' \supsetneq M\}$.
By the definition of $\mathcal{M}$, there exists $L \in \mathcal{L}$ such that:
$M = \pi^{-1}_j(L) = \Pi_i \{Y_i: Y_i = X, \textrm{ if } i \neq j \textrm{ and }Y_i = L \textrm{ if } i=j\}$.
Making a similar substitution for all $M' \in \mathcal{M}$, we deduce that: $\Pi_i \{Y_i : Y_i = X, \textrm{ if } i \neq j \textrm{ and } Y_i = L, \textrm{ if }i=j\} =  \bigcap \Pi_i \{Z_i : Z_i = X, \textrm{ if } i \neq j \textrm{ and } Z_i = L', if \textrm{ i=j },\,L' \in \mathcal{L},\,L' \supset L\} = \bigcap \{W_i : W_i = X, \textrm{ if } i \neq j \textrm{ and } W_i = \bigcap\{L' \in \mathcal{L}: L' \supsetneq L \},\textrm{ if } i =j\}$. So, $L = \bigcap \{L' \in \mathcal{L} : L' \supsetneq L\}$,
which implies that $L = \bigcup \{L' \in \mathcal{L} : L' \subsetneq L\}$. Hence, $M = \Pi_i \{Y_i : Y_i = X, \textrm{ if }
i \neq j \textrm{ and } Y_i = L, \textrm{ if } i=j\} = \Pi_i \{\Theta_i : \Theta_i = X, \textrm{ if }, i \neq j
\textrm{ and } \Theta_i = \bigcup \{L' \in L : L' \subsetneq L\}, \textrm{ if } i=j\}$. So, $M = \bigcup \{M' \in \mathcal{M} : M' \subsetneq M\}$, which proves that $\mathcal{M}$ is interlocking.

%\[M = \Pi_{\substack{Y_i=X,i\neq j\\ Y_i=L,i=j}} %Y_i\] Making a similar substitution for all $M' \in \mathcal{M}$ allows us to deduce that: \[\Pi_{\substack{Y_i=X,i\neq j\\ %Y_i=L,i=j}} Y_i = \bigcap \{\Pi_{\substack{Z_i=X,i \neq j\\ Z_i=L \in \mathcal{L}}} Z_i : L' \in \mathcal{L}, L' \supsetneq L\}
%= \bigcap_l = X, i \neq j\\ W_i = \bigcap\{L' \in \mathcal{L} : L' \supsetneq L\}}} W_i\] But $L = \bigcap\{L' \in %\mathcal{L}: L' \supsetneq L\}$, implies
%that $L = \bigcup \{L' \in \mathcal{L}: L' \subsetneq L\}$, which implies that: \[M = \Pi_{\substack{Y_i= X,i \neq j\\ %Y_i=L,i=j}} Y_i = \Pi_{\substack {\Theta_i = X_j, i \neq j \\ \Theta_i = \bigcup \{L' \in \mathcal{L} : L' \subsetneq L\},i=j}} %\Theta_i\]

%$L \times X = \bigcap\{L' \times X : L' \in \mathcal{L}, L' \supsetneq L\}$, which is equal to
%$\bigcap\{L' \in \mathcal{L}: L' \supsetneq L\} \times X$.
%Because $\mathcal{L}$ is interlocking, we must have that
%$\bigcup\{L' \in \mathcal{L}: L' \subsetneq L\}$
%and hence
%     $M = L \times X
%  = \bigcup\{L' \in \mathcal{L}:L' \subsetneq L\} \times X$. The latter
%is equal to $\bigcup\{M' \in \mathcal{M}: M' \subsetneq M\}$.
%Thus, $M$ is interlocking.
\end{proof}

\begin{lemma}\label{lemma-preliminary to interlocking}
Let $X$ be a set and let $\mathcal{L}_{X^I}$ be a collection of subsets
of $X^I$. If the following condition holds: [if $(x_i)_{i \in I} \notin L$, $L \in \mathcal{L}_{X^I}$,
then $x_j \notin \pi_j(L)$],
then $\pi_j (L) \supset \bigcap \{\pi_j(L'): \pi_j(L') \supsetneq \pi_j(L)\}$ implies that $L \supset \bigcap \{L' : L' \supsetneq L\}$.
\end{lemma}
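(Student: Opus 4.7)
The plan is to observe that the hypothesis ``if $(x_i)_{i \in I} \notin L$ then $x_j \notin \pi_j(L)$'' is precisely the contrapositive statement that every $L \in \mathcal{L}_{X^I}$ is a cylinder along the $j$-th coordinate, i.e.\ $L = \pi_j^{-1}(\pi_j(L))$. One inclusion is automatic, and the other is exactly what the hypothesis spells out. Once this cylinder identity is in hand, the lemma becomes essentially a bookkeeping exercise with the preimage operator.

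First I would record that for any $L, L' \in \mathcal{L}_{X^I}$, one has $L' \supsetneq L$ if and only if $\pi_j(L') \supsetneq \pi_j(L)$. The forward direction follows because $\pi_j$ preserves inclusions, while equality of projections would, via the cylinder identity, force $L = L'$. The reverse direction follows from monotonicity of $\pi_j^{-1}$ together with the observation that the fibres $\pi_j^{-1}(\{a\})$ are non-empty (under the obvious nondegeneracy of $X$ and $I$), so strict inclusion of projections pulls back to strict inclusion of cylinders. Consequently the index families $\{L' : L' \supsetneq L\}$ and $\{L' : \pi_j(L') \supsetneq \pi_j(L)\}$ coincide.

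Next I would use the fact that $\pi_j^{-1}$ commutes with arbitrary intersections to write
\[
\bigcap \{L' : L' \supsetneq L\}
= \bigcap \{\pi_j^{-1}(\pi_j(L')) : \pi_j(L') \supsetneq \pi_j(L)\}
= \pi_j^{-1}\!\left(\bigcap \{\pi_j(L') : \pi_j(L') \supsetneq \pi_j(L)\}\right).
\]
Applying $\pi_j^{-1}$ to both sides of the assumed inclusion $\pi_j(L) \supset \bigcap \{\pi_j(L') : \pi_j(L') \supsetneq \pi_j(L)\}$, and invoking $L = \pi_j^{-1}(\pi_j(L))$ on the left, immediately yields the desired $L \supset \bigcap \{L' : L' \supsetneq L\}$.

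I do not anticipate any real obstacle here: the conclusion is almost tautological once the cylinder reformulation is noticed. The only mildly delicate point is verifying that strict inclusion of projections pulls back to strict inclusion of preimages, which hinges on $\pi_j$ having non-empty fibres; this is implicit in the product setting of the paper, so it should require only a brief remark rather than any substantive argument.
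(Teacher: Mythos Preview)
Your argument is correct. Both your proof and the paper's rest on the same observation --- that the hypothesis forces each $L \in \mathcal{L}_{X^I}$ to satisfy $L = \pi_j^{-1}(\pi_j(L))$ --- but they package it differently. The paper proceeds by a direct element chase: assuming $\bigcap\{L' : L' \supsetneq L\} \not\subset L$, it picks a witness $(x_i)_{i\in I}$ in the intersection but outside $L$, applies the hypothesis to get $x_j \notin \pi_j(L)$ while $x_j \in \pi_j(L')$ for every $L' \supsetneq L$, and contradicts the assumed projection inclusion. You instead isolate the cylinder identity explicitly, verify that the two indexing families $\{L': L' \supsetneq L\}$ and $\{L': \pi_j(L') \supsetneq \pi_j(L)\}$ coincide, and then pull back the projection inclusion through $\pi_j^{-1}$ using its commutation with intersections. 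Your route is more structural and makes transparent why the lemma is essentially formal; the paper's route is shorter on the page but leaves the identification of indexing families implicit. One small remark: your worry about non-empty fibres is unnecessary for the strictness step, since $L' = L$ trivially forces $\pi_j(L') = \pi_j(L)$, and the contrapositive together with $L' \supset L$ already gives $L' \supsetneq L$.
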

\begin{proof}
If $\bigcap\{L' : L' \supsetneq L\} \subsetneq L$, then there exists $(x_i)_{i \in I}$, such
that $(x_i)_{i \in I} \in \bigcap \{L' : L' \supsetneq L\}$ and $(x_i)_{i \in I} \notin L$. So,
$(x_i)_{i \in I} \in L'$, for every $L' \supsetneq L$, and $\pi_j ((x_i)_{i \in I})= x_j \notin \pi_j(L)$.
Thus, $x_j \in \pi_j(L')$, for all $\pi_j(L') \supsetneq \pi_j(L)$ and $x_j \notin \pi_j(L)$, which
contradicts the statement of the Lemma \ref{lemma-preliminary to interlocking}.
\end{proof}

\begin{theorem}
Let $\mathcal{L}_{X^I}$ be an interlocking nest in $X^I$. Then, $\pi_j(\mathcal{L}_{X^I})$,
$j \in I$, is an interlocking nest in $X$, if the condition in Lemma \ref{lemma-preliminary to interlocking}
holds.
\end{theorem}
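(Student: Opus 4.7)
The plan is to transfer interlocking from $\mathcal{L}_{X^I}$ down to $\pi_j(\mathcal{L}_{X^I})$ via the projection, with Lemma \ref{lemma-preliminary to interlocking} lifting the ``no proper outer cover'' hypothesis, and a short separate argument ensuring that proper inclusions survive projection.

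First, I would invoke Proposition \ref{Nest-from plane to line}, which already guarantees that $\pi_j(\mathcal{L}_{X^I})$ is a nest on $X$; only the interlocking property remains to be verified. So I would fix $L\in\mathcal{L}_{X^I}$ and suppose $\pi_j(L)=\bigcap\{\pi_j(L'):\pi_j(L')\supsetneq\pi_j(L)\}$, which in particular gives the inclusion $\pi_j(L)\supset\bigcap\{\pi_j(L'):\pi_j(L')\supsetneq\pi_j(L)\}$ required by Lemma \ref{lemma-preliminary to interlocking}. That lemma then delivers $L\supset\bigcap\{L'\in\mathcal{L}_{X^I}:L'\supsetneq L\}$, and combined with the trivial reverse inclusion this becomes $L=\bigcap\{L'\in\mathcal{L}_{X^I}:L'\supsetneq L\}$. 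Now the interlocking of $\mathcal{L}_{X^I}$ forces $L=\bigcup\{L'\in\mathcal{L}_{X^I}:L'\subsetneq L\}$, and applying $\pi_j$ to both sides yields $\pi_j(L)=\bigcup\{\pi_j(L'):L'\in\mathcal{L}_{X^I},\,L'\subsetneq L\}$.

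The one delicate step, and where I expect the hypothesis of Lemma \ref{lemma-preliminary to interlocking} to do its real work, is confirming that each $\pi_j(L')$ appearing in this union is a \emph{strict} subset of $\pi_j(L)$, so that the union is genuinely indexed by elements of $\pi_j(\mathcal{L}_{X^I})-\{\pi_j(L)\}$ lying strictly inside $\pi_j(L)$, as Definition \ref{definition - interlocking} demands. For this I would argue by contradiction: suppose $L'\subsetneq L$ but $\pi_j(L')=\pi_j(L)$, and pick any $(y_i)_{i\in I}\in L\setminus L'$. Since $(y_i)_{i\in I}\notin L'$, the standing hypothesis yields $y_j\notin\pi_j(L')=\pi_j(L)$, contradicting $(y_i)_{i\in I}\in L$. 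Hence $\pi_j(L')\subsetneq\pi_j(L)$ for every $L'\subsetneq L$, and the displayed equality becomes $\pi_j(L)=\bigcup\{\pi_j(L'):\pi_j(L')\subsetneq\pi_j(L)\}$, establishing the interlocking condition for $\pi_j(\mathcal{L}_{X^I})$.
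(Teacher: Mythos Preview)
Your proof is correct and follows essentially the same route as the paper: assume $\pi_j(L)=\bigcap\{\pi_j(L'):\pi_j(L')\supsetneq\pi_j(L)\}$, invoke Lemma~\ref{lemma-preliminary to interlocking} to lift this to $L=\bigcap\{L':L'\supsetneq L\}$, apply interlocking of $\mathcal{L}_{X^I}$ to get $L=\bigcup\{L':L'\subsetneq L\}$, and push forward by $\pi_j$. Your only addition is the explicit verification that $L'\subsetneq L$ forces $\pi_j(L')\subsetneq\pi_j(L)$ under the standing hypothesis; the paper asserts the corresponding inclusion $\bigcup\{\pi_j(L'):L'\subsetneq L\}\subset\bigcup\{\pi_j(L'):\pi_j(L')\subsetneq\pi_j(L)\}$ without comment, so your extra paragraph is a genuine improvement in rigor rather than a departure in strategy.
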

\begin{proof}
We have that $\pi_j(\mathcal{L}_{X^I}) = \{\pi_j(L): L \in \mathcal{L}_{X^I}\}$. Let
\[\pi_j(L) = \bigcap \{\pi_j(L')\}: \pi_j(L') \supsetneq \pi_j(L).~~~(1)\] We will prove that
\[\pi_j(L)= \bigcup \{\pi_j(L') : \pi_j(L') \subsetneq \pi_j(L)\}~~~(2)\] or,
equivalently, we will prove that: \[\pi_j(L) \subset \bigcup \{\pi_j(L') : \pi_j(L') \subsetneq \pi_j(L)\}\]

Since $(1)$ is satisfied, we have that $\pi_j(L) \supset \bigcap \{\pi_j(L') : \pi_j(L') \supsetneq \pi_j(L)\}$ which implies,
by Lemma \ref{lemma-preliminary to interlocking}, that $L \supset \bigcap\{L' : L' \supsetneq L\}$. But since
it is always true that $L \subset \bigcap\{L' : L' \supsetneq L\}$, we have that $L = \bigcap \{L' : L' \supsetneq L\}$,
and since $\mathcal{L}_{X^I}$ is interlocking, we have that $\pi_j(L) \subset \bigcup \{\pi_j(L'): L' \subsetneq L\} \subset \bigcup \{\pi_j(L'): \pi_j(L') \subsetneq \pi_j(L)\}$, which completes the proof.
\end{proof}

\begin{theorem}
Let $X$ be a topological space and let $\mathcal{L}_{X^I}$, $\mathcal{R}_{X^I}$ be two interlocking, weakly $T_0$-separating nests in $X^I$, such that their
union, $\mathcal{L}_{X^I} \cup \mathcal{R}_{X^I}$ is weakly $T_1$-separating, with respect to the $j$-th variable. Let also for $L \in \mathcal{L}_{X^I}$ and $R \in \mathcal{R}_{X^I}$, $\mathcal{L}_{X^I}$ and $\mathcal{R}_{X^I}$ satisfy the following two conditions:
\begin{enumerate}

\item If $x_j \notin L$, then $x_j \notin \pi_j(L)$.

\item If $x_j \notin R$, then $x_j \notin \pi_j(R)$

\end{enumerate}
Then, $\pi_j(\mathcal{L}_{X^I})$ and $\pi_j(\mathcal{R}_{X^I})$
are interlocking, $T_0$-separated nests of open sets, in $X$, such that
their union, $\pi_j(\mathcal{L}_{X^I}) \cup \pi_j(\mathcal{R}_{X^I})$
is $T_1$-separating (thus, the topology of $X$ will coincide with the order topology).
\end{theorem}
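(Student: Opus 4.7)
The plan is to verify each of the claimed properties of the projected nests by invoking the corresponding transfer result already proved in this section, and then to close with a single application of Theorem~\ref{theorem - them Dalen-Wattel}(3). All the machinery is in place; the task is essentially to bundle it together without confusion between $\mathcal{L}_{X^I}$ and $\mathcal{R}_{X^I}$, and between hypotheses (1) and (2).

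First, Proposition~\ref{Nest-from plane to line} gives immediately that $\pi_j(\mathcal{L}_{X^I})$ and $\pi_j(\mathcal{R}_{X^I})$ are nests on $X$. Next, Theorem~\ref{Theorem: From plane to line}, applied to $\mathcal{L}_{X^I}$ using hypothesis (1) and then to $\mathcal{R}_{X^I}$ using hypothesis (2), upgrades each of these nests to a $T_0$-separating one. For interlocking, I would apply the preceding interlocking-projection theorem together with Lemma~\ref{lemma-preliminary to interlocking} to each of the two nests, drawing respectively on (1) and (2). Openness of $\pi_j(L)$ and $\pi_j(R)$ in $X$ follows from openness of $L$ and $R$ in $X^I$ (implicit in the hypothesis that they will form a subbase) together with the standard fact that coordinate projections from a product topology are open maps.

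For $T_1$-separation of the union, I would take $x_1 \neq x_2$ in $X$ and construct tuples $(y_i)_{i \in I}$ and $(z_i)_{i \in I}$ that agree off the $j$-th coordinate and satisfy $y_j = x_1$, $z_j = x_2$. Weak $T_1$-separation of $\mathcal{L}_{X^I} \cup \mathcal{R}_{X^I}$ with respect to the $j$-th variable then yields $L \in \mathcal{L}_{X^I}$ and $R \in \mathcal{R}_{X^I}$ with $(y_i) \in L$, $(z_i) \notin L$, $(z_i) \in R$, $(y_i) \notin R$. Conditions (1) and (2) convert the two "not in" statements to $x_2 \notin \pi_j(L)$ and $x_1 \notin \pi_j(R)$, while the membership statements give $x_1 \in \pi_j(L)$ and $x_2 \in \pi_j(R)$ directly from the definition of projection. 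With $T_0$-separation, interlocking, openness and the $T_1$-separating union all secured, Theorem~\ref{theorem - them Dalen-Wattel}(3) concludes that $X$ is a LOTS whose topology coincides with the order topology induced by $\triangleleft_{\pi_j(\mathcal{L}_{X^I})}$.

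The main obstacle is not any single hard argument but a bookkeeping point: making sure that conditions (1) and (2) are invoked in exactly the form demanded by Theorem~\ref{Theorem: From plane to line} and Lemma~\ref{lemma-preliminary to interlocking}, and that the implicit openness assumption on $X^I$ (the product topology, or any finer topology making $\pi_j$ open) is genuinely available, since the statement phrases openness only in the conclusion for $X$. Once these compatibilities are confirmed, every remaining step is a citation of a previously established result.
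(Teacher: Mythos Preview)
Your proposal is correct and follows essentially the same route as the paper: the paper's proof is precisely a bundle of citations to the earlier transfer results (projection preserves nests, weak $T_0$-separation becomes $T_0$-separation, weak $T_1$-separation becomes $T_1$-separation, interlocking is preserved) together with the remark that $\pi_j$ is an open map. Your version is slightly more explicit---you spell out the $T_1$-separation argument rather than merely citing the relevant proposition, and you flag the implicit openness assumption on the sets in $X^I$---but there is no genuine difference in strategy.
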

\begin{proof}
We have already shown that the canonical projection of a weakly $T_0$-separating nest
is a $T_0$-separating nest, that the projection of a weakly $T_1$-separating
union of two nests of open sets is $T_1$-separating, and also that interlockingness
is preserved in a nest, if we project it via canonical projection. The only thing that remains to complete
the proof is to remark that $\pi_j$ is an open mapping, so for each $L$ open in $X^I$, $\pi_j(L)$ and $\pi_j(R)$ are open sets in $X$,
and this completes the proof.
\end{proof}

\begin{corollary}
Let $X$ be a topological space and let $\mathcal{L}$ and $\mathcal{R}$
be two $T_0$-separating, interlocking nests of open sets, in $X$, such that
$\mathcal{L} \cup \mathcal{R}$ is $T_1$-separating. Then, $\mathcal{S} =
\{\bigcap_{j_k \in J_k} \pi^{-1}_{j_k}(L \cap R)\}$, $J_k \subset I,\,L \in \mathcal{L},
R \in \mathcal{R}\}$ will be a base for a topology in $X^I$.
\end{corollary}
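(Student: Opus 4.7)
The plan is to verify directly the two base axioms for $\mathcal{S}$: that $\mathcal{S}$ covers $X^I$, and that the intersection of any two members of $\mathcal{S}$ contains, about each of its points, some member of $\mathcal{S}$. The essential observation, which reduces the whole argument to bookkeeping, is the \emph{nest property}: if $L_1,L_2 \in \mathcal{L}$ then one of $L_1 \subset L_2$, $L_2 \subset L_1$ holds, so $L_1 \cap L_2 \in \{L_1,L_2\} \subset \mathcal{L}$; the analogous statement holds for $\mathcal{R}$. Consequently, any finite intersection of elements of a nest is again in the nest.

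For the covering axiom, I would adopt the standard convention that the empty intersection equals the ambient space, so that taking $J_k = \emptyset$ places $X^I$ itself in $\mathcal{S}$. If one prefers to work only with nonempty $J_k$, then given $x = (x_i)_{i \in I} \in X^I$ one picks any index $j \in I$, uses the $T_1$-separation of $\mathcal{L} \cup \mathcal{R}$ to produce $L \in \mathcal{L}$ and $R \in \mathcal{R}$ both containing $x_j$, and concludes that $x \in \pi_j^{-1}(L \cap R) \in \mathcal{S}$.

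For the intersection axiom I would take two arbitrary members of $\mathcal{S}$, say
\[
S_1 = \bigcap_{j \in J_1} \pi_j^{-1}(L_j^1 \cap R_j^1), \qquad S_2 = \bigcap_{j \in J_2} \pi_j^{-1}(L_j^2 \cap R_j^2),
\]
with $L_j^r \in \mathcal{L}$ and $R_j^r \in \mathcal{R}$ for the relevant indices. Since $\pi_j^{-1}$ distributes over intersections, $S_1 \cap S_2 = \bigcap_{j \in J_1 \cup J_2} \pi_j^{-1}(A_j)$, where $A_j = L_j^1 \cap R_j^1$ for $j \in J_1 \setminus J_2$, $A_j = L_j^2 \cap R_j^2$ for $j \in J_2 \setminus J_1$, and $A_j = (L_j^1 \cap L_j^2) \cap (R_j^1 \cap R_j^2)$ for $j \in J_1 \cap J_2$. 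By the nest property, $L_j^1 \cap L_j^2 \in \mathcal{L}$ and $R_j^1 \cap R_j^2 \in \mathcal{R}$, so in every case $A_j = L_j \cap R_j$ for suitable $L_j \in \mathcal{L}$, $R_j \in \mathcal{R}$. Hence $S_1 \cap S_2$ itself lies in $\mathcal{S}$, and one may take $S_3 := S_1 \cap S_2$ to satisfy the base criterion at every point of $S_1 \cap S_2$.

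The main obstacle is essentially negligible: the proof is pure combinatorics on the nests, combined with the fact that $\pi_j^{-1}$ commutes with intersections. It is worth remarking that neither the interlocking assumption nor the $T_0$- and $T_1$-separation assumptions are actually used in proving that $\mathcal{S}$ is a base; those hypotheses become relevant only if one wishes to identify the resulting topology with a (box- or product-style) topology built from the LOTS topology on $X$ in the sense of Theorem~\ref{theorem - them Dalen-Wattel}, which is not part of the stated claim.
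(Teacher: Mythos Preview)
The paper states this corollary without proof, presenting it as an immediate consequence of the preceding material on preimages of nests under projections (Propositions~\ref{Nest-From line to plane} and~\ref{Weak-T1 from line to plane}, and the interlocking-preservation theorem). In effect the paper relies on the reader to see that the preimages $\pi_j^{-1}(L)$ and $\pi_j^{-1}(R)$ form nests on $X^I$ and that finite intersections of such cylinders constitute the usual product-type base.

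Your approach is a direct, self-contained verification of the two base axioms, and it is correct. The key step---using the nest property to see that $\mathcal{S}$ is actually closed under finite intersections, so that $S_1\cap S_2$ already lies in $\mathcal{S}$---is cleaner than invoking the chain of earlier propositions. Your closing remark is also on point: none of the interlocking or separation hypotheses is used in showing that $\mathcal{S}$ is a base; those assumptions matter only if one wants to identify the resulting topology with the product of the LOTS topology on $X$ coming from Theorem~\ref{theorem - them Dalen-Wattel}. One small caveat: in your alternative covering argument (with nonempty $J_k$), the existence of $L\in\mathcal{L}$ and $R\in\mathcal{R}$ both containing a given $x_j$ can fail at the $\triangleL$-extreme points of $X$ unless $X\in\mathcal{L}\cap\mathcal{R}$; the empty-intersection convention you invoke first avoids this issue entirely, so it is the safer route.
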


\subsection{LOTS and Function Spaces.}

Let $X$ and $Y$ be two sets and let $\mathcal{F}(X,Y) = \{f: f \textrm{ is a function }, f: X \to Y\}$. Then, it is known that
$\mathcal{F}(X,Y) = \Pi_{x \in X} Y_X$, where $Y_X = Y$, for all $x \in X$.

\begin{theorem}
Let $X$ and $Y$ be two sets, and let $\mathcal{F}(X,Y)$ be the function space, that consists
of all functions from $X$ to $Y$. Let also $\mathcal{L}$ be a nest on $Y$. Then, for each
$x \in X$, the set $\mathcal{L}^{x}_{\mathcal{F}(X,Y)} = \{(x,L): L \in \mathcal{L}\}$, where
$(x,L) = \{f \in \mathcal{F}(X,Y) : f(x) \in L\}$, will be a nest on $\mathcal{F}(X,Y)$.
\end{theorem}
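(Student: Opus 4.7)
The plan is to reduce this assertion to Proposition \ref{Nest-From line to plane} via the identification $\mathcal{F}(X,Y) = \Pi_{x \in X} Y_x$ (with $Y_x = Y$ for every $x$) already noted just before the statement. Under this identification, the evaluation map $\mathrm{ev}_x : \mathcal{F}(X,Y) \to Y$, sending $f \mapsto f(x)$, is exactly the canonical projection $\pi_x$ onto the $x$-th coordinate. Then the set $(x,L) = \{f \in \mathcal{F}(X,Y) : f(x) \in L\}$ is precisely $\pi_x^{-1}(L)$, and hence $\mathcal{L}^x_{\mathcal{F}(X,Y)}$ coincides with $\pi_x^{-1}(\mathcal{L})$ in the notation of Proposition \ref{Nest-From line to plane}.

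Once this identification is made explicit, the conclusion is immediate: Proposition \ref{Nest-From line to plane} says that the preimage, under a canonical projection, of a nest on $Y$ is a nest on the product, which here is $\mathcal{F}(X,Y)$. So I would write one sentence setting up the identification $(x,L) = \pi_x^{-1}(L)$ and then cite the proposition to conclude.

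If I preferred to give the argument directly, the verification is a two-line comparison: given $(x,L_1),(x,L_2) \in \mathcal{L}^x_{\mathcal{F}(X,Y)}$, since $\mathcal{L}$ is a nest on $Y$ either $L_1 \subseteq L_2$ or $L_2 \subseteq L_1$; in the first case any $f$ with $f(x) \in L_1$ has $f(x) \in L_2$, whence $(x,L_1) \subseteq (x,L_2)$, and symmetrically in the second case. Linear ordering by $\subseteq$ on $\mathcal{L}^x_{\mathcal{F}(X,Y)}$ follows.

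There is no real obstacle here: the only subtle point is recognizing that the definition of $(x,L)$ is literally the pullback under evaluation-at-$x$, so the content is already packaged in the earlier product-of-nests lemma. I would therefore present the proof in the short form that names this identification and invokes Proposition \ref{Nest-From line to plane}, rather than repeating the two-line inclusion argument.
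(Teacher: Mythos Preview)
Your proposal is correct and matches the paper's approach exactly: the paper's one-line proof simply observes that $\mathcal{L}^{x}_{\mathcal{F}(X,Y)} = \{\pi_x^{-1}(L) : L \in \mathcal{L}\}$ and concludes (implicitly via Proposition~\ref{Nest-From line to plane}). Your write-up is in fact more detailed than the paper's, but the content is identical.
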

\begin{proof}
We remark that $\mathcal{L}^{x}_{\mathcal{F}(X,Y)} = \{\pi_x^{-1}(L) : L \in \mathcal{L}\}$ is
a nest, and this proves the assertion of the theorem.
\end{proof}

\begin{remark}
Let $\mathcal{F}(X,Y)$ be a function space and let also $\mathcal{L}_Y$ and $\mathcal{R}_Y$ be
two nests on $Y$, such that $\mathcal{L}_Y \cup \mathcal{R}_Y$ is $T_1$-separating. Let also
$x \in X$ be a point in $X$. Then, $\{(x,L) : L \in \mathcal{L}_Y\} \cup \{(x,R): R \in \mathcal{R}_Y\}$
is weakly $T_1$-separating, with respect to $x$. This means that if $f,g \in \mathcal{F}(X,Y)$,
such that $f(x) \neq g(x)$, then there exist $L,R$ in $\mathcal{L}_Y,\mathcal{R}_Y$, respectively,
such that $f \in (x,L)$ and $g \notin (x,L)$ and also $g \in (x,R)$ and $f \notin (x,R)$, which is
an immediate consequence of Proposition \ref{Weak-T1 from line to plane}.

Last, but not least, the union $\bigcup_{x \in X} \mathcal{L}_{\mathcal{F}(X,Y)} \cup \bigcup_{x \in X} \mathcal{R}_{\mathcal{F}(x,y)}$ is a subbase for the point-open topology.

%The union of all nests, $\bigcup_{x \in X} \mathcal{L}^{x}_{\mathcal{F}(X,Y)} = \{(x,L): x \in X,\,L \in \mathcal{L}\}$
%is a subbase for the {\em point-open topology}, if we consider $Y$ to be a topological space, with $\bigcup \mathcal{L}^{x}$ %being a subbase.
\end{remark}

\begin{corollary}
Let $X$ and $Y$ be two sets and let also $\mathcal{L}$ be a nest on $Y$. Then, for each $x \in X$, all the
nests of the form $\mathcal{L}^{x}=\{(x,L): L \in \mathcal{L}\}$ are interlocking.
\end{corollary}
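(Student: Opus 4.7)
The plan is to reduce the statement directly to the theorem, proved a few pages earlier in this chapter, asserting that if $\mathcal{L}_X$ is an interlocking nest on a set $X$ then $\{\pi_j^{-1}(L) : L \in \mathcal{L}_X\}$ is an interlocking nest on $X^I$. The only nontrivial ingredient is matching up the notation $(x,L)$ used in this subsection with the preimage notation $\pi_j^{-1}(L)$ used earlier.

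First I would write $\mathcal{F}(X,Y) = \prod_{x' \in X} Y_{x'}$ with $Y_{x'} = Y$ for every $x' \in X$, as is recalled just before the preceding theorem; the evaluation at $x$ then is literally the canonical projection $\pi_x : \mathcal{F}(X,Y) \to Y$. Second, I would observe the identity
\[
(x,L) \;=\; \{f \in \mathcal{F}(X,Y) : f(x) \in L\} \;=\; \pi_x^{-1}(L),
\]
so that $\mathcal{L}^x = \pi_x^{-1}(\mathcal{L}) = \{\pi_x^{-1}(L) : L \in \mathcal{L}\}$. This is essentially the one-line remark that opens the proof of the preceding theorem in the subsection.

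Third, assuming $\mathcal{L}$ is interlocking (a hypothesis inherited from the ambient context of this subsection, where all the nests produced by the van Dalen--Wattel machinery are interlocking), I would apply the earlier pullback theorem with the product index set taken to be $X$ and the distinguished index taken to be the given point $x \in X$. The conclusion is that $\mathcal{L}^x$ is an interlocking nest on $\mathcal{F}(X,Y)$, which is exactly what the corollary asserts.

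The main obstacle is interpretational rather than technical. Without some form of interlockingness on $\mathcal{L}$ the conclusion cannot hold, since interlockingness of $\pi_x^{-1}(\mathcal{L})$ forces interlockingness of $\mathcal{L}$ (apply the same projection identity together with Lemma \ref{lemma-preliminary to interlocking} to any $L \in \mathcal{L}$). So the real work is to make the hypothesis explicit and then recognise that the corollary is a pure reindexing of the preceding pullback theorem; once the correspondence $(x,L) \leftrightarrow \pi_x^{-1}(L)$ is in place, no further calculation is required.
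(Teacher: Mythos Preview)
The paper gives no explicit proof of this corollary; it is left as an immediate consequence of the identification $(x,L)=\pi_x^{-1}(L)$ made in the proof of the preceding theorem together with the earlier result that $\{\pi_j^{-1}(L):L\in\mathcal{L}_X\}$ is interlocking whenever $\mathcal{L}_X$ is. Your reduction is therefore exactly the intended one.

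Your interpretational caveat is well taken and worth recording: as printed, the corollary only assumes that $\mathcal{L}$ is a nest, but the conclusion genuinely requires $\mathcal{L}$ to be interlocking. Indeed, since $L\mapsto\pi_x^{-1}(L)$ preserves strict inclusion, arbitrary unions, and arbitrary intersections, $\mathcal{L}^x$ is interlocking if and only if $\mathcal{L}$ is; a non-interlocking $\mathcal{L}$ (e.g.\ $\mathcal{L}=\{\{0\},\{0,1\}\}$ on $Y=\{0,1,2\}$) yields a non-interlocking $\mathcal{L}^x$. So the hypothesis ``interlocking'' on $\mathcal{L}$ should be read into the statement, after which your argument is complete and matches the paper's intended derivation.
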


%%%
\section{Nests and the Ring ${}^\bullet \mathbb{R}$ of Fermat Reals.}
%%%
\label{sec:5}

\subsection{A short introduction.}

The idea of the ring of Fermat Reals $\FR$ has come as a possible alternative to Synthetic
Differential Geometry (see e.g. [11,12,13,14]) and its main aim is the development
of a new foundation of smooth differential geometry for finite and infinite-dimensional
spaces. In addition, $\FR$ could play a role of a potential alternative in some certain
 problems in the field ${}^\star \mathbb{R}$ in Nonstandard Analysis (NSA), because
 the applications of NSA in differential geometry are very few.
One of the ``weak'' points of $\FR$ at the moment is
the lack of a natural topology, carrying the strong topological
properties of the line.

P. Giordano and M. Kunzinger have recently done brave steps
towards the topologization of the ring ${}^\bullet \mathbb{R}$ of
Fermat Reals. In particular, they have constructed two topologies;
the Fermat topology and the omega topology (see [11]). The Fermat topology is generated
by a complete pseudo-metric and is linked to the differentiation of non-standard
smooth functions. The omega topology is generated by a complete metric and
is linked to the differentiation of smooth functions on infinitesimals.
Although both topologies are very useful in developing infinitesimal
instruments for smooth differential geometry, none of these
two topologies aims to characterize the Fermat real line from an
order-theoretic perspective. In fact, neither makes the space $T_1$, while
an appropriate order-topology would equip the Fermat Real Line
with the structure of a monotonically normal space, at least. The
possibility to define a linear order relation on $\FR$, so that it
can be viewed as a LOTS (linearly ordered topological space) can
be considered important, because $\FR$ is an alternative mathematical
model of the real line, having some features with respect to
applications in smooth differential geometry and mathematical physics.
It is therefore natural to ask whether for $\FR$ peculiar characteristics
of $\mathbb{R}$ hold or not.

In this section we will focus in the order relation which is stated
in [12], and we will interpreted through nests.

As we shall see in Definition \ref{definition-equivalens relation Fermat Real},
the idea of the formation of $\FR$ starts with an equivalence relation in
the little-oh polynomials, where $\FR$ is the quotient space under this
relation. This treatment permits us to view these little-oh polynomials as
numbers.

\subsection{Definitions.}

\begin{definition}
A little-oh polynomial $x_t$ (or $x(t)$) is an ordinary set-theoretical function, defined as follows:
\begin{enumerate}

\item $x : \mathbb{R}_{\ge 0} \to \mathbb{R}$ and

\item $x_t = r + \sum_{i=1}^k \alpha_i t^{a_i} + o(t)$, as $t \to 0^+$, for
suitable $k \in \mathbb{N}$, $r,\alpha_1,\cdots,\alpha_k \in \mathbb{R}$ and $a_1,\cdots,a_k \in \mathbb{R}_{\ge 0}$.
\end{enumerate}
\end{definition}
The set of all little-oh polynomials is denoted by the symbol $\mathbb{R}_o[t]$. So, $x \in \mathbb{R}_o(t)$,
if and only if $x$ is a polynomial function with real coefficients, of a real variable $t \ge 0$, with
generic positive powers of $t$ and up to a little-oh function $o(t)$, as $t \to 0^+$.

\begin{definition}\label{definition-equivalens relation Fermat Real}
Let $x,y \in \mathbb{R}_o[t]$. We declare $x\sim y$ (and we say $x=y$ in ${}^\bullet \mathbb{R}$), if and only
if $x(t) = y(t) + o(t)$, as $t \to 0^+$.
\end{definition}

The relation $\sim$ in Definition \ref{definition-equivalens relation Fermat Real} is an equivalence
relation and ${}^\bullet \mathbb{R} :=\mathbb{R}_o[t]/\sim$.

A first attempt to define an order in ${}^\bullet \mathbb{R}$ has come from Giordano.

\begin{definition}[Giordano]\label{definition-order Giordano}
Let $x,y \in {}^\bullet \mathbb{R}$. We declare $x \le y$, if and only if there exists $z \in {}^\bullet \mathbb{R}$,
such that $z = 0$ in ${}^\bullet \mathbb{R}$ (i.e. $\lim_{t \to 0^+} z_t/t = 0$) and for
every $t \ge 0$ sufficiently small, $x_t \le y_t + z_t$.
\end{definition}

For simplicity, one does not use equivalence relation but works with an equivalent language of
representatives. If one chooses to use the notations of [12], one has to note that
Definition \ref{definition-order Giordano} does not depend on representatives.

As the author describes in [12], the order relation in NSA admits all formal properties
among all the theories of (actual) infinitesimals, but there is no good dialectic of these
properties with their informal interpretation. In particular, the order in ${}^\star \mathbb{R}$
inherits by transfer all the first order properties but, on the other hand, in the quotient
field ${}^\star \mathbb{R}$ it is difficult to interpret these properties of the order
relation as intuitive properties of the corresponding representatives. For example, a geometrical
interpretation like that of $\FR$ seems not possible for ${}^ \star \mathbb{R}$. Definition \ref{definition-order Giordano}
provides a clear geometrical representation of the ring $\FR$ (see, for instance, section 4.4 of [12]).

\subsection{The Fermat Topology and the omega-topology on $\FR$.}

A subset $A \subset \FR^n$ is open in the Fermat topology, if it can be written as
$A = \bigcup \{{}^\bullet U \subset A: U \textrm{ is open in the natural topology in }\mathbb{R}^n\}$.
Giordano and Kunzinger describe this topology as the best possible one for sets having a ``sufficient
amount of standard points'', for example ${}^\bullet U$. They add that this connection between the
Fermat topology and standard reals can be glimpsed by saying that the monad $\mu(r) := \{x \in \FR : \textrm{ standard part of }x = r\}$ of a real $r \in \mathbb{R}$
is the set of all points which are limits of sequences with respect to the Fermat topology. However it is obvious that in sets of infinitesimals there is a need for constructing a (pseudo-)metric generating a finer topology that the authors call the omega-topology (see \cite{Giordano-Kunziger}).
Since neither the Fermat nor the omega-topology are Hausdorff when restricted to $\FR$ and since each
of them describes sets having a ``sufficient amount'' of standard points or infinitesimals, respectively,
there is a need for defining a natural topology on $\FR$ describing sufficiently all Fermat reals
and carrying the best possible properties.

%%%
\subsection{Interlocking Nests on ${}^\bullet \mathbb{R}$.}
%%%

%A first disadvantage in Definition \ref{definition-order Giordano}
%is that the order $\le$ in $\FR$ does not generate interlocking nests, missing
%points from the Fermat real line. In particular, the nest $\cL$ consisting of sets $L=\{k \in \FR : k \le l\}$, for some $l \in \FR$,
%has as maximal element the fermat real $l$, but the complement of $L$, i.e. $L^c = \{k \in \FR : k> l\}$, for some $l \in \FR$,
%does not have a minimal element. Thus, we first remark that the order of Definition \ref{definition-order Giordano} makes ${}^\bullet \mathbb{R}$ a GO-space, a subspace of a particular LOTS. So we will now have to construct an appropriate order in ${}^\bullet \mathbb{R}$ which makes it LOTS, by completing the missing minimal elements from complements of sets with maximal elements. Even the fact that $\le$ is
%linear, it generates the discrete topology on $\FR$ and, if considered as a strict order, the restriction of
%its topology in $\mathbb{R}$ will be again the discrete topology.

\begin{theorem}
The pair $({}^\bullet \mathbb{R},<_F)$, where $<_F$ is defined as follows:

\[
x<_Fy \Leftrightarrow \begin{cases}
\exists\,\{k \in \FR : k \le l\}, \textrm{ some } l \in \FR, \textrm{ such that } x \in \{k \in \FR : k \le l\} \not\ni y, l \in \FR
 \\
or\\
x = \max\{k \in \FR : k \le l\}, \textrm{ some } l \in \FR \textrm{ and } \exists h \in \FR : h>0,\,y = x + h\\
or\\
y = \min \{k \in \FR : l \le k\}, \textrm{ some } l \in \FR \textrm{ and } \exists h \in \FR : h>0,\,x = y - h
\end{cases}
\]
where $x,y$ are distinct Fermat reals, is a linearly ordered set.
\end{theorem}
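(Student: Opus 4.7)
The plan is to verify the three defining axioms of a strict linear order for $<_F$: irreflexivity (immediate, since the formula is restricted to distinct $x,y$ and each case forces $y \neq x$), trichotomy, and transitivity. Rather than attacking these axioms case by case through the disjunctive definition, I would reduce to the nest-based machinery of Theorem \ref{theorem - for Theorem Dalen-Wattel}.

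First I would introduce the family of Giordano down-sets
\[
\cL_F \ := \ \{\,\{k \in \FR : k \le l\}\,:\,l \in \FR\,\}.
\]
The crucial preliminary fact is that Giordano's relation $\le$ is in fact linear on $\FR$: for any two representatives $x_t, y_t$ of distinct classes, the difference $y_t - x_t$ is a little-oh polynomial whose unique leading nonzero term has exponent $\le 1$ and therefore possesses a definite sign for $t > 0$ sufficiently small. Combined with reflexivity and antisymmetry, this yields linearity of $\le$, whence $\cL_F$ is totally ordered by $\subset$ (a nest) and $T_0$-separates $\FR$ (for $x \neq y$, taking $l := x$ witnesses the separation whenever $y \not\le x$).

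Next I would invoke part 4 of Theorem \ref{theorem - for Theorem Dalen-Wattel}: a $T_0$-separating nest induces the linear order $\triangleL$. It then suffices to prove that $<_F$ coincides with $\triangleleft_{\cL_F}$. Case 1 of the defining formula for $<_F$ is, by inspection, exactly the condition $x \triangleleft_{\cL_F} y$. In case 2, $y = x + h$ with $h > 0$ in $\FR$ forces $x < y$ in the Giordano sense, so the choice $l := x$ gives $x \in \{k : k \le x\}$ while $y \notin \{k : k \le x\}$, recovering case 1; case 3 is symmetric. Thus $<_F$ agrees with $\triangleleft_{\cL_F}$ and is a linear order.

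The main obstacle, and the only substantive computation, is the linearity of Giordano's $\le$ on $\FR$. This claim about representatives does not appear explicitly earlier in the excerpt, but follows from inspection of the leading exponent in $y_t - x_t$ together with the requirement that this exponent be $\le 1$ for the two classes to be distinct. Once this is granted, the theorem is a transparent application of Theorem \ref{theorem - for Theorem Dalen-Wattel}, and cases 2 and 3 of the defining formula turn out to be consequences of case 1 rather than contributing new ordered pairs to $<_F$.
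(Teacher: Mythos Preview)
Your argument is correct and considerably more direct than the paper's. You identify $<_F$ with the strict part of Giordano's $\le$ via the $T_0$-separating nest $\cL_F$ and Theorem~\ref{theorem - for Theorem Dalen-Wattel}(4), and you observe (correctly, once $\le$ is known to be total on $\FR$) that clauses~2 and~3 in the definition of $<_F$ are already subsumed by clause~1.

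The paper takes a different route. It does not appeal to the totality of $\le$ directly; rather, it notes that the nests $\cL = \{\{k : k \le l\} : l \in \FR\}$ and $\cR = \{\{k : l \le k\} : l \in \FR\}$ fail to be interlocking (each $L$ has a $\triangleL$-maximal element while $X\setminus L$ has no minimum), and then performs the standard GO-to-LOTS type construction: for each $L$ it \emph{chooses} a point $x_L^+ \in \FR$ (and dually $y_R^-$), defines a collapse map $p$ sending $x_L^+ \mapsto x_L$ and $y_R^- \mapsto y_R$, and declares $x <_F y$ when $p(x) \triangleL p(y)$, or $(x,y) = (x_L, x_L^+)$, or $(x,y) = (y_R^-, y_R)$. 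Linearity is then asserted as ``obvious'' from the construction, and the specific choice $x_L^+ = x_L + h$ with $h>0$ is made at the end to match the statement.

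What each approach buys: your argument is the clean way to prove the theorem as stated, since totality of Giordano's $\le$ is established in the cited references. The paper's construction, by contrast, is really aimed at the \emph{next} theorem: it is designed to exhibit the interlocking structure of $\cL_{<_F}$ and $\cR_{<_F}$ needed for the LOTS claim. Your observation that clauses~2 and~3 contribute no new ordered pairs is correct at the level of linearity, but it does not by itself address interlockingness, which is where the paper's added machinery earns its keep.
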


\begin{proof}
The order of Definition \ref{definition-order Giordano} gives two nests, namely
the nest $\cL$, which consists of all sets $L = \{k \in \FR: k \le l\}$, some $l \in \FR$ and the nest $\cR$,
which consists of all sets $R = \{k \in \FR : l \le k\}$, some $l \in \FR$.
In addition, we have that
$\triangleLq = \trianglerighteq_{\cR} = \le$.

We remark that, for any $L \in \cL$ (respectively for any $R \in \cR$), $L$ (resp. $R$) has a $\triangleLq$-maximal element (resp. $\triangleRq$-maximal element for $R$), such that $X-L$ has no $\triangleLq$-minimal element (resp. $X-R$ has no $\triangleRq$-minimal element). So,
 neither $\cL$ nor $\cR$ are interlocking.
%Thus, $\cL^* = \cL$ and $\cR^* = \cR$.

Now, for all $L=\{k \in \FR : k \le l\} \in \cL$, some $l \in \FR$, let $x_L$ denote the $\triangleLq$-maximal element of $L$ and for all $R=\{k \in \FR : l \le k\} \in \cR$, some $l \in \FR$
let $y_R$ denote the $\triangleLq$-minimal element of $R$.

Furthermore, for each $L \in \cL$ choose $x_L^+ \in \FR$  and
for each $R \in \cR$ choose $y_R^- \in \FR$, where $x_L^+$ and $y_R^-$ are distinct points in ${}^\bullet \mathbb{R}$,
and define a map $p : {}^\bullet \mathbb{R} \to {}^\bullet \mathbb{R}-(\{x_L^+ : L \in \cL\} \cup \{y_R^- : R \in \cR\})$, as follows:
\[
p(x)=\begin{cases}
x,&\text{if }x\in {}^\bullet \mathbb{R}-(\{x_L^+ : L \in \cL\} \cup \{y_R^- : R \in \cR\})\\
x_L,&\text{if }x=x_L^+\\
y_R,&\text{if }x=y_R^-
\end{cases}
\]

Now, define an order $<_F$ on ${}^\bullet \mathbb{R}$, so that:
\[
x<_Fy \Leftrightarrow \begin{cases}
p(x) \triangleL  p(y) \\
or\\
x=x_L \textrm{ and } y = x_L^+\\
or\\
x=y_R^- \textrm{ and } y = y_R
\end{cases}
\]

Obviously, $<_F$ is a linear order and the restriction
of $<_F$ to ${}^\bullet \mathbb{R}-(\{x_L^+ : L \in \cL\} \cup \{y_R^- : R \in \cR\})$ equals $\triangleLq$, the order in Definition \ref{definition-order Giordano}. In addition, we can set $x_L^+ = x_L + h$, where $h$ is not zero in $\FR$ and $h>0$, that is,
 $\lim_{t \to 0^+} h_t/t \neq 0$ and, respectively, we set $x_R^- = x_R-h$, and this completes the proof.
\end{proof}

\begin{theorem}
$\FR$ equipped with the order topology from $<_F$ is a LOTS.
\end{theorem}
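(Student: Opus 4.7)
My plan is to apply clause (3) of Theorem \ref{theorem - them Dalen-Wattel} directly, by exhibiting two interlocking, $T_0$-separating nests of $<_F$-open sets whose union is $T_1$-separating and forms a subbase for the order topology $\cT_{<_F}$ on $\FR$. To this end I would take the two natural families of open rays of $<_F$:
\[
\cL_F := \{\{k \in \FR : k <_F l\} : l \in \FR\}, \qquad
\cR_F := \{\{k \in \FR : l <_F k\} : l \in \FR\}.
\]
Each of $\cL_F$ and $\cR_F$ is linearly ordered by inclusion, hence a nest of $<_F$-open sets, and $\cL_F \cup \cR_F$ is a subbase for $\cT_{<_F}$ by the very definition of the order topology. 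Because $<_F$ is a linear order (by the preceding theorem), the relation $\triangleL$ on $\FR$ induced by $\cL_F$ coincides with $<_F$, so by Theorem \ref{theorem - for Theorem Dalen-Wattel}(3) the nest $\cL_F$ is $T_0$-separating; the analogous statement holds for $\cR_F$, and then Theorem \ref{theorem-preliminary1} gives that $\cL_F \cup \cR_F$ is $T_1$-separating.

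The key step is to verify interlockingness, for which I would use clause (2) of Theorem \ref{theorem - before Lema for Theorem Dalen-Wattel}. Pick any $L_F = \{k \in \FR : k <_F l\} \in \cL_F$ and suppose it admits a $\triangleL$-maximum $m$; then $m$ is the $<_F$-immediate predecessor of $l$, and the complement $\FR - L_F = \{k \in \FR : l \le_F k\}$ has $l$ as its $\triangleL$-minimum. Hence every $L_F \in \cL_F$ with a maximum has a complement with a minimum, which is precisely the interlocking criterion. A symmetric argument handles $\cR_F$, and then Theorem \ref{theorem - them Dalen-Wattel}(3) concludes that $(\FR, \cT_{<_F})$ is a LOTS.

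The argument is essentially bookkeeping, as all the heavy lifting lies in the construction of $<_F$ in the preceding theorem: the adjoining of the companion points $x_L^+$ and $y_R^-$ was performed precisely so that every closed initial segment whose complement had lacked a minimum under the original Giordano order gains a $<_F$-successor, which is exactly what makes the open-ray nests automatically interlock. The only point requiring vigilance is confirming that the relation $\triangleL$ induced by $\cL_F$ really does coincide with $<_F$ on the whole of $\FR$ (and likewise $\triangleR$ with the reverse of $<_F$); this is immediate from linearity of $<_F$ and the defining clause of $\triangleL$.
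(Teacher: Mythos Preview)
Your argument is correct for the statement as literally written, and it coincides with what the paper records as Remark~(1) immediately after the theorem. Note, though, that the assertion is nearly tautological: once the preceding theorem has established that $<_F$ is a linear order, $(\FR,\cT_{<_F})$ is a LOTS by the very definition of the term, so invoking the van Dalen--Wattel machinery is valid but heavier than necessary.

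The paper's own proof, however, is aimed at something different from (and more substantive than) the theorem's stated conclusion. It shows that the topology $\cT$ generated by the original, non-interlocking Giordano nests $\cL\cup\cR$ on the subspace
\[
\FR\setminus\bigl(\{x_L^+:L\in\cL\}\cup\{y_R^-:R\in\cR\}\bigr)
\]
agrees with the subspace topology that this set inherits from the $<_F$-order topology on all of $\FR$. The key computation is that each $L\in\cL$, with $\triangleLq$-maximum $x_L$, equals the trace on that subspace of the $<_F$-open ray $\{x\in\FR:x<_F x_L^+\}$ (and symmetrically for $\cR$). This is the step that legitimises viewing $(\FR,<_F)$ as a LOTS \emph{extension} of the GO-structure coming from Giordano's order~$\le$. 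Your argument, while correct for the literal assertion, does not address this subspace comparison; so you have proved the stated theorem but not the content the paper's proof actually supplies.
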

\begin{proof}
We will now show that the topology $\cT$ on ${}^\bullet \mathbb{R}-(\{x_L^+ : L \in \cL\} \cup \{y_R^- : R \in \cR\})$ coincides with the
subspace topology on ${}^\bullet \mathbb{R}-(\{x_L^+ : L \in \cL\} \cup \{y_R^- : R \in \cR\})$ that is inherited from the $<_F$-order topology
on ${}^\bullet \mathbb{R}$.

But, since $\cL \cup \cR$ forms a subbasis for $\cT$, that consists of two nests, every
set in $\cT$ can be written as a union of sets of the form $L \cap R$, where
$L \in \cL$ and $R \in \cR$. It suffices therefore to show that every $L \in \cL$ and
$R \in \cR$ can be written as the intersection of an order-open set with ${}^\bullet \mathbb{R}-(\{x_L^+ : L \in \cL\} \cup \{y_R^- : R \in \cR\})$.
But this is always true, since if $L \in \cL$, with $\triangleLq$-maximal element $x_L$, then
$L = {}^\bullet \mathbb{R}-(\{x_L^+ : L \in \cL\} \cup \{y_R^- : R \in \cR\}) \cap \{ x \in \FR: x<_F x_L^+\}$.

The argument for $R \in \cR$ is similar, and this completes the proof.
\end{proof}

\subsection{Remarks.}

\begin{enumerate}
\item The order topology $\cT_{<_F}$ equals the topology $\cT_{\cL_{<_F} \cup \cR_{<_F}}$,
where $\cL_{<_F}=\{k \in \FR : k <_F l\}$, some $l \in \FR$ and $\cR_{<_F}=\{k \in \FR : l <_F k\}$, some $l \in \FR$.
This is because $\cL_{<_F} \cup \cR_{<_F}$ $T_1$-separates $\FR$ and both $\cL_{<_F}$ and $\cR_{<_F}$ are interlocking nests. So, unlike
the GO-space topology $\cT_\le$ on $\FR$, where $\cT_\le \subset \cT_{\cL \cup \cR}$, $<_F$ provides a natural extension
of the natural linear order of the set of real numbers to the Fermat real line and the order topology from $<_F$ can
be completely described via the nests $\cL_{<_F}$ and $\cR_{<_F}$.

\item Viewing the Fermat real line as a LOTS and working with nests $\cL_{<_F}$ and $\cR_{<_F}$, one can now
define the product topology for $\FR^n$, some positive integer $n$, or even more generaly for $\Pi_{i \in I} \FR_i$, some arbitrary indexing
set $I$, in the usual way via the subbasis $\pi_{j_0}^{-1}(A_{j_0}) = \Pi_{i \in I}\{\FR_i : i \neq j_0\} \times A_{j_0}$,
where $A_{j_0}$ is an open subset in the coordinate space $\FR_{j_0}$ in the order topology $\cT_{<_F}$ and
$\pi_i : \Pi_{i \in I}\FR_i \to \FR_i$ the projection.

\item In this way one can define continuity for
any function $f$ from a topological space $Y$ into the product space $\Pi_{i \in I} \FR_i$ via
the continuity of $\pi_i \circ f :Y \to \FR_i$.

\item The neight of $\FR$ is $2$ and the neight of $\FR^n = n+1$ (see \cite{Will-Brian}).
Using the product topology, as stated in Remark (2), we use four nests in order to
define -for example- the topology in $\FR^2$, but since the neight of $\FR^2$ is $3$,
one can define a topology using three nests exclusively.

\end{enumerate}

\subsection{Questions.}

\begin{enumerate}
\item As a LOTS, $(\FR,<_F)$ has rich topological properties. It is, for example,
a monotone normal space. It would be interesting though to have an extensive study on
the metrizability of this space. It is known that in a GO-space the terms
metrizable, developable, semistratifiable, etc. are equivalent (see \cite{Faber-metrizability} and \cite{Encyclopedia}).
The real line (i.e. the set of all standard reals, from the point of view of $\FR$)
is a developable LOTS and this is equivalent to say that it is also a metrizable LOTS.
Is $(\FR,\cT_{<_F})$ developable?

\item Which of the subspaces of $(\FR,\cT_{<_F})$ are developable?
\end{enumerate}

Since any sequence $x_1,x_2,\cdots$ of points in $\Pi_{i \in I} \FR_i$ will converge to a point
$x \in \Pi_{i \in I}\FR_i$, iff for every projection $\pi_i: \Pi_{i \in I} \FR_i \to \FR_i$ the
sequence $\pi_i(x_1), \pi_i(x_2),\cdots$ converges to $\pi_i(x)$ in the coordinate space $\FR_i$,
any answer to the above questions will be foundamental towards our understanding of convergence in the ring
of Fermat Reals.

%%%%%%


\begin{thebibliography}{99.}%

\bibitem{Dalen-Wattel} J. van Dalen and E. Wattel. A Topological Characterization of Ordered Spaces.
{\it General Topology and Appl.}, 3:347-354, 1973.

\bibitem{Purisch-History} S. Purisch. A History of Results on Orderability and Suborderability.
{\it Handbook of the History of General Topology}, Vol. 2 (San Antonio, TX, 1993), volume 2 of Hist.
Topol., pages 689-702. Kluwer Acad. Publ., Dordrecht, 1998.

\bibitem{Good-Papadopoulos} C. Good and K. Papadopoulos. A Topological Characterization of Ordinals:
van Dalen and Wattel revisited. {\it Topology Appl.}, 159:1565-1572, 2012.

\bibitem{compendium} Gerhard Gierz, Karl Heinrich Hofmann, Klaus Keimel, Jimmie D. Lawson, Michael W. Mislove
and Dana S. Scott. A Compendium of Continuous Lattices, {\it Springer-Verlag}, Berlin 1980.

\bibitem{good-bijective-preimages} Chris Good. Bijective Preimagies of $\omega_1$. {\it Topology Appl.}, 75(2):125-142, 1997.

\bibitem{Lutzer-LOTS} David J. Lutzer. Ordered Topological Spaces. {\it Surveys in General Topology}, 247-295, Academic
Press, New York, 1980.

\bibitem{de-groot} J. de Groot and P.S. Schnare. A topological characterization of products of compact totally ordered spaces.
{\it General Topology and Appl.}, 2: 67-73, 1972.
  
\bibitem{Reed} G.M. Reed. The intersection topology w.r.t. the real line and the countable ordinals.
{\it Trans. Amer. Math. Soc.}, Vol. 247, number 2, pages 509-520, 1986.

\bibitem{van-douwn-misnamed} Eric K. van Douwen. On Mike's misnamed intersection topologies.
{\it Topology Appl.}, Vol. 51, number 2, 197-201. 1993.

\bibitem{On-The-Orderability-Thm} Kyriakos Papadopoulos, On the Orderability Problem and the Interval Topology, in {\it Topics in Mathematical Analysis and Applications}, Optimization and Its Applications Springer Series, T. Rassias and L. Toth Eds, Springer Verlag, 2014.

\bibitem{Giordano-Kunziger} Paolo Giordano and Michael Kunzinger. Topological and Algebraic
Structures on the Ring of Fermat Reals. {\it Israel Journal of Mathematics}, 193 (2013), 459-505.

\bibitem{Fermat-Reals-Prebook} Paolo Giordano. Fermat Reals: Nilpotent Infinitesimals and Infinite
Dimensional Spaces. {\it Prebook} (http://arxiv.org/abs/0907.1872).

\bibitem{Giordano-6} Paolo Giordano. Fermat reals: infinitesimals without Logic. {\it Miskolc Mathematical Notes}, Vol. 14 (2013), No. 3, pp. 65–80.

\bibitem{Giordano-7} Paolo Giordano. The ring of fermat reals. {\it Advances in Mathematics} 225 (2010), pp. 2050-2075.

\bibitem{Encyclopedia} K.P. Hart, J.-I. Nagata, J.E. Vaughan. Encyclopedia of General Topology. {\it Elsevier Science and Technology Books} (2014).

\bibitem{Faber-metrizability} M.J. Faber. Metrizability in Generalized Ordered Spaces. Matematisch Centrum, Amsterdam 1974.

\bibitem{Will-Brian} W.R. Brian. Neight: The Nested Weight of a Topological Space. {\it to appear in Topology Proceedings}.

\bibitem{Synthetic-Dif-Geo} A. Kock. Synthetic Differential Geometry. volume 51 of {\it London Math, Soc. Lect. Note Series}, Cambridge Univ. Press, 1981.

\bibitem{Basic-Synth-Dif-Geo} R. Lavendhomme. Basic Concepts of Synthetic Differential Geometry. {\it Kluwer Academic Publishers}, Dordrecht, 1996.

\bibitem{Infinitesimal-Analysis} I. Moerdijk, G.E. Reyes. Models for Smooth Infinitesimal Analysis. {\it Springer}, Berlin, 1991.

\bibitem{My-Paper-in-QA} Kyriakos Papadopoulos. On Properties of Nests: Some Answers and Questions. {\it Questions
and Answers in General Topology} (to appear)


\end{thebibliography}
\end{document}